\title{Cops and Robber game in higher-dimensional manifolds with spherical and
Euclidean metric}
\author{Vesna Ir\v si\v c$^{b,c}$\thanks{The work was done while the author held a postdoctoral position at the Simon Fraser University, supported by NSERC Grant R611450. Supported also by the Slovenian Research Agency (research core funding P1-0297 and projects J1-2452, J1-1693, N1-0095, N1-0218).} 
\and 
Bojan Mohar$^{a,c}$\thanks{Supported in part by the NSERC Discovery Grant R611450 (Canada) and by the Research Project J1-2452 of ARRS (Slovenia). On leave from IMFM, Jadranska 19, 1000 Ljubljana.} 
\and 
Alexandra Wesolek$^{a}$\thanks{Supported by the Vanier Canada Graduate Scholarships program.}}
\date{\today}
\newtheorem{theorem}{Theorem}
\newtheorem{proposition}{Proposition}
\newtheorem{lemma}{Lemma}
\newtheorem{remark}{Remark}
\newcommand{\eps}{\varepsilon}
\DeclareMathOperator{\pr}{pr}
\DeclareMathOperator{\proj}{proj}
\DeclareMathOperator{\diam}{diam}
\newcommand{\un}{\underline}
\newcommand\RR{\ensuremath{\mathbb{R}}}
\newcommand\NN{\ensuremath{\mathbb{N}}}
\newcommand\Val{\mathop{{\hbox{\sc Val}}}}
\newcommand\ValC{\underline{\Val}_{\,C}}
\newcommand\ValR{\overline{\Val}_{R}}
\begin{document}

\maketitle

\begin{center}
$^a$ Department of Mathematics, Simon Fraser University, Burnaby, BC, Canada\\
\medskip

$^b$ Faculty of Mathematics and Physics, University of Ljubljana, Slovenia\\
\medskip

$^c$ Institute of Mathematics, Physics and Mechanics, Ljubljana, Slovenia\\
\medskip
\end{center}

\begin{abstract}
The recently introduced variation of the game of cops and robber is played on geodesic spaces. In this paper we establish some general strategies for the players, in particular the generalized radial strategy and the covering space strategy. Those strategies are then applied to the game on the $n$-dimensional ball, the sphere, and the torus.
\end{abstract}

\noindent
{\bf Keywords:} game of cops and robber; ball, sphere, torus; radial strategy

\noindent
{\bf AMS Subj.\ Class.\ (2020)}: 54E35, 91A23, 91A50

\section{Introduction}
In this paper we study the recently introduced game of Cops and Robber played on geodesic spaces,  see \cite{Mo21}. In this game, the cops are trying to come as close to the robber as possible, whereas the robber is trying to escape. The moves of the players are discrete, and the maximum length of each move is the same for all the players but different in each step. The precise definition is given in the next section. We explore several fundamental winning strategies for the cops and escape strategies for the robber in various typical geodesic spaces like $n$-dimensional balls, spheres, and other manifolds. We investigate the distinction between the cops winning the game (getting arbitrarily close to the robber), and the cops actually catching the robber (one of them having the exact same position as the robber). The min-max theorem holds for this variation of the game \cite{Mo21}, and it is known that three cops suffice to win the game on orientable surfaces of genus 0 or 1 \cite{Mo22}. While this variation of the game is different from the version discussed by Bollob\'as, Leader and Walters \cite{BoLeWa12}, it is more similar to the discrete Cops and Robber game played on graphs.

Historically, pursuit-evasion games have  mostly been studied in the setup of differential games with one  \cite{Is65,Ji15,Le94,Pa70,Pe93} or more pursuers \cite{HaBr74,Ch76,Ps76,LePa85,FeIbAlSa20}. The Lion and Man problem that was proposed by Richard Rado in the late 1930s and presented in Littlewood's Miscellany \cite{Li53,Li86} is a version of the game with one pursuer (the Lion) and one evader (the Man), moving within a circular arena (unit disk in the plane), both running with equal maximum speed. The cops and robber game played on graphs with one cop and one robber was independently introduced by Nowakowski and Winkler \cite{NoWi83} and Quilliot \cite{Qui78}, while the version with more than one cop was introduced by  Aigner and Fromme \cite{AiFr84}. For each graph $G$ and a positive integer $k$, the \emph{Cops and Robber game} on $G$, involves two players. The first player controls $k$ \emph{cops} placed at the vertices of the graph, and the second player controls the \emph{robber}, who is also positioned at some vertex. While the players alternately move to adjacent vertices (or stay at their current position), the cops want to catch the robber and the robber wants to prevent this ever to happen. The main question is how many cops are needed on the given graph $G$ in order that they can guarantee the capture. The minimum such number of cops is the \emph{cop number} $c(G)$ of the graph (see Bonato and Nowakowski \cite{BoNo11} for an enlightening exposition about the game played on graphs).
Our version of the game of cops and robber is played on an arbitrary geodesic space (a compact, path-connected space endowed with intrinsic metric), modifying the Lion and Man game to the rules of the game played on graphs.

In the rest of the paper, we first state needed definitions and some preliminary results. Then the cops and robber game played on an $n$-dimensional ball is studied, which leads to the generalized radial strategy, which is applied to the $n$-dimensional sphere. Next, the covering method is presented and applied to the $n$-dimensional torus. We conclude the paper by presenting a space on which no finite number of cops can catch the robber.

\section{Preliminaries}

\subsection{Geodesic spaces}
Let $(X,d)$ be a metric space. For $x,y\in X$, an \emph{$(x,y)$-path} is a continuous map $\gamma: I\to X$ where $I=[0,1]$ is the unit interval on $\RR$ and $\gamma(0)=x$ and $\gamma(1)=y$. The space is \emph{path-connected} if for any $x,y\in X$, there exists an $(x,y)$-path connecting them.

One can define the \emph{length} $\ell(\gamma)$ of the path $\gamma$ by taking the supremum over all finite sequences $0=t_0<t_1<t_2< \cdots < t_n=1$ of the values $\sum_{i=1}^n d(\gamma(t_{i-1}),\gamma(t_i))$. Note that $\ell(\gamma)$ may be infinite; if it is finite, we say that $\gamma$ is \emph{rectifiable}. Clearly, the length of any $(x,y)$-path is at least $d(x,y)$.  The metric space $X$ is a \emph{geodesic space} if for every $x,y\in X$ there is an $(x,y)$-path whose length is equal to $d(x,y)$.

An $(x,y)$-path $\gamma$ is \emph{isometric} if $\ell(\gamma) = d(x,y)$. Observe that for $0\le t < t' \le 1$ the subpath $\gamma|_{[t,t']}$ is also isometric. Therefore the set $\gamma(I) = \{\gamma(t)\mid t\in I\}$ is an isometric subset of $X$. With a slight abuse of terminology, we say that the image $\gamma(I)\subset X$ is an \emph{isometric path} in $X$.

A path $\gamma$ is a \emph{geodesic} if it is locally isometric, i.e., for every $t\in [0,1]$ there is an $\varepsilon>0$ such that the subpath $\gamma|_J$ on the interval $J = [t-\varepsilon,t+\varepsilon]\cap[0,1]$ is isometric. A path with $\gamma(0)=\gamma(1)$ is called a \emph{loop} (or a \emph{closed path}). When we say that a loop is a geodesic, we mean it is geodesic as a path and it is also locally isometric around its base point, i.e. $\gamma|_{[1-\varepsilon,1]\cup[0,\varepsilon]}$ is isometric for some $\varepsilon>0$.

One can consider any path-connected compact metric space $X$ and then define the shortest-path distance. For $x,y\in X$, the \emph{shortest-path distance} from $x$ to $y$ is defined as the infimum of the lengths of all $(x,y)$-paths in $X$. If any two points in $X$ are joined by a path of finite length, then the shortest path distance gives the same topology on $X$. Compactness implies that any sequence of $(x,y)$-paths contains a point-wise convergent subsequence, and that the limit points determine an $(x,y)$-path. This implies that there is a path whose length is equal to the infimum of all path lengths. Hence, for this metric, which is also known as the \emph{intrinsic metric}, $X$ is a geodesic space. If the intrinsic metric induces a norm, we will denote the norm by $ \Vert  \cdot  \Vert $.

We refer to \cite{BuBuIv01} and \cite{BuSh04} for further details on metric geometry, and on geodesic spaces in particular.
From now on we will assume that $X$ is a compact path-connected space, endowed with intrinsic metric; in other words, $X$ is a compact geodesic space.

\subsection{The game of Cops and Robber on geodesic spaces}

Detailed definition and basic properties of the game can be found in \cite{Mo21, Mo22}, here we only present the necessary definitions.

Let $X$ be a compact, path-connected metric space endowed with intrinsic metric $d$, and let $k\ge1$ be an integer. The space $X$ is also referred to as the \emph{game space}. The \emph{Game of Cops and Robber} on $X$ with $k$ cops is defined as follows. The first player, who controls the robber, selects the \emph{initial positions} for the robber and for each of the $k$ cops. Formally, this is a pair $(r^0,c^0)\in X^{k+1}$, where $r^0\in X$ is the robber's position and $c^0 = (c_1^0,\dots,c_k^0)\in X^k$ are the positions of the cops. The same player selects his \emph{agility function}, which is a map $\tau: \NN\to\RR_+$ which defines the lengths of the steps of the game.
The agility function must allow for the total duration of the game to be infinite, which means that $\sum_{n\ge1} \tau(n) = \infty$.

After the initial position and the agility function are chosen, the game proceeds as a discrete game in consecutive steps. Having made $n-1$ steps $(n\ge1)$, the players have their positions $(r^{n-1},c_1^{n-1},\dots,c_k^{n-1})\in X^{k+1}$. The $n$th step will have its duration determined by the agility: the move will last for time $\tau(n)$, and each player can move with unit speed up to a distance at most $\tau(n)$ from his current position.
So, the robber moves to a point $r^n\in X$ at distance at most $\tau(n)$ from its current position, i.e. $d(r^{n-1}, r^n)\le \tau(n)$. The destination $r^n$ is revealed to the cops. Then each cop $C_i$ ($i\in[k]$) selects his new position $c_i^n$ at distance at most $\tau(n)$ from its current position, i.e. $d(c_i^{n-1}, c_i^n)\le \tau(n)$. The game stops if $c_i^n = r^n$ for some $i\in[k]$. In that case, the \emph{value of the game} is 0 and we say that the cops \emph{have caught} the robber. Otherwise the game proceeds.
If it never stops, the \emph{value of the game} is
\begin{equation}\label{eq:value of game}
v = \inf_{n\ge0} \min_{i\in[k]} d(r^n, c_i^n).
\end{equation}
If the value is 0, we say that the \emph{cops won} the game; otherwise the \emph{robber wins}. Note that the cops can win even if they never catch the robber.

The minimum integer $k$ such that $k$ cops win the game on $X$ is denoted by $c(X)$ and called the \emph{cop number} of $X$. If such a $k$ does not exist, then we set $c(X)=\infty$. Similarly we define the \emph{strong cop number} $c_0(X)$ as the minimum $k$ such that $k$ cops can always catch the robber.

As mentioned in~\cite{Mo22}, with a growing number of cops, the value of the game tends to 0 for every compact $X$. While this does not imply that $c(X)$ is finite, we still believe it to be true.

However, there are compact geodesic spaces where the number of cops needed to catch the robber is unbounded, $c_0(X)=\infty$. An example of such a space can be found in Section~\ref{sec:unboundedc0}, and indicates that $c_0$ might be related to the dimension of the game space.

\subsection{Strategies and $\eps$-approaching game}

A \emph{strategy of the robber} is a function $s \colon X\times X^k\times \RR_+ \to X$, $(r,c,t) \mapsto r'$, such that $d(r,r')\le t$. This strategy tells us to move the robber from $r$ to $r'$ along some geodesic of length $d(r,r')$. A \emph{strategy of cops} is a function $q \colon (r',c,t) \mapsto c'$ such that each cop $C_i$ moves from his current position $c_i$ to a point $c_i'$ at distance at most $t$ from $c_i$.

Using agility $\tau$, initial position $(r^0,c^0)$ and strategies $s,q$ of the robber and the cops, we denote by $v_\tau(s,q)$ the value of the game when it is played using these strategies. Here we implicitly assume the initial position $(r^0,c^0)$ and agility $\tau$ selected by the robber are part of the strategy $s$. The \emph{guaranteed outcome} for the robber (with fixed or arbitrary agility) is 
$$
    \ValR(\tau) = \inf_q \sup_s v_\tau(s,q) \quad \textrm{and} \quad  \ValR = \sup_\tau \ValR(\tau),
$$
where $q$ and $s$ run over all strategies of the cops and the robber (respectively). Similarly, the \emph{guaranteed outcome} for the cops (with fixed or arbitrary agility) is
$$
    \ValC(\tau) = \sup_s \inf_q v_\tau(s,q) \quad \textrm{and} \quad  \ValC = \sup_\tau \ValC(\tau).
$$
If $\ValC = 0$, then we say that \emph{cops win} the game. If $\ValR>0$, then the \emph{robber wins}. In~\cite{Mo21} it was shown that $\ValC = \ValR$. 

Let the agility $\tau$ and a positive integer $N$ be fixed. Considering only $N$ steps of the game, $T=T_N(\tau)=\sum_{i=1}^N \tau(i)$ is the duration of the game during these $N$ steps. We say that these $N$ steps represent an \emph{$\varepsilon$-approaching game for agility $\tau$} if the cops have a strategy $q_\varepsilon$ such that within these $N$ steps, their distance from the robber is at most $\ValC(\tau) + \varepsilon$. It was shown in~\cite{Mo22} that for every agility $\tau$ and every $\varepsilon>0$, there is an $\varepsilon$-approaching game with finitely many steps.

After making $N$ steps of the $\varepsilon$-approaching game for agility $\tau$, using strategy $q_\varepsilon$ of the cops, the players come to a certain position and then they continue playing. Now, the cops can use an $\varepsilon/2$-approaching game for the remaining agility in steps $N+1,N+2,\dots$ using strategy $q_{\varepsilon/2}$. By definition of the $\varepsilon/2$-approaching game, they come within distance $\varepsilon/2$ from the value of the game. Next, they can use $\varepsilon/3$-approaching game for the remaining agility, and so on. If the agility $\tau$ is decreasing, the strategies $q_{\varepsilon}, q_{\varepsilon/2}, q_{\varepsilon/3},\dots$ can be combined into a single strategy that is optimal for the game on $X$ since the cops come arbitrarily close to the value of the game.

Given an $\varepsilon>0$, $c_\varepsilon(X)$ is defined as the minimum number of cops that guarantee to win the $\varepsilon$-approaching game on $X$. With this notation, the following result holds. 

\begin{theorem}[\cite{Mo22}]
\label{thm:eps-game}
If $X$ is a compact geodesic space, then $$c(X) = \sup \{ c_\varepsilon(X) \mid \varepsilon>0 \}.$$
\end{theorem}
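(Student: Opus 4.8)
The plan is to prove the two inequalities $c(X)\le\sup_{\varepsilon>0}c_\varepsilon(X)$ and $\sup_{\varepsilon>0}c_\varepsilon(X)\le c(X)$ separately; the first is essentially formal, while the second is where the finitely-many-steps $\varepsilon$-approaching result of \cite{Mo22} recalled above is used. For the first inequality there is nothing to prove when the right-hand side is infinite, so put $k=\sup_{\varepsilon>0}c_\varepsilon(X)<\infty$; then $c_\varepsilon(X)\le k$ for every $\varepsilon>0$ by definition of the supremum (here $c_\varepsilon(X)$ is non-increasing and integer-valued in $\varepsilon$, so the supremum is in fact attained for all sufficiently small $\varepsilon$, but this refinement is not needed). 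Fix an arbitrary agility $\tau$ and an arbitrary $\varepsilon>0$ and unwind what it means for $k$ cops to win the $\varepsilon$-approaching game: there is a cop strategy $q$ with which, against every robber play, the $k$ cops come within distance $\varepsilon$ of the robber after finitely many steps. Since the value $v_\tau(s,q)$ of any single play is the infimum of the cop-robber distance over all steps, this one good configuration already forces $v_\tau(s,q)\le\varepsilon$ for every robber strategy $s$, so $\ValC(\tau)=\sup_s\inf_{q'}v_\tau(s,q')\le\varepsilon$. As $\tau$ and $\varepsilon$ were arbitrary, $\ValC=\sup_\tau\ValC(\tau)=0$ for $k$ cops, i.e.\ $k$ cops win on $X$, hence $c(X)\le k$.

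For the reverse inequality $\sup_{\varepsilon>0}c_\varepsilon(X)\le c(X)$ there is nothing to prove when $c(X)=\infty$, so assume $k=c(X)<\infty$. Then $\ValC=0$ in the game on $X$ with $k$ cops, and since $\ValC=\sup_\tau\ValC(\tau)$ with every term non-negative, $\ValC(\tau)=0$ for every agility $\tau$. Invoking \cite{Mo22}, for every agility $\tau$ and every $\varepsilon>0$ there are a finite $N$ and a cop strategy after which the $k$ cops are within distance $\ValC(\tau)+\varepsilon=\varepsilon$ of the robber, so $k$ cops win the $\varepsilon$-approaching game on $X$. Therefore $c_\varepsilon(X)\le k$ for every $\varepsilon>0$, and taking the supremum over $\varepsilon$ yields $\sup_{\varepsilon>0}c_\varepsilon(X)\le k=c(X)$. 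Together with the previous paragraph, this proves the identity.

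The step I expect to require the most care is the unwinding used in the first inequality: one has to pin down that winning the $\varepsilon$-approaching game means reaching cop-robber distance at most $\varepsilon$ (an absolute bound, not $\ValC(\tau)+\varepsilon$) in finitely many steps, uniformly over robber strategies and for every agility chosen by the robber. The same subtlety appears in the more hands-on argument indicated in the remark preceding the statement --- play to get within $\varepsilon$, restart to get within $\varepsilon/2$, and so on --- since each restart proceeds with the tail of the original agility and one must know that the relevant $\varepsilon$-approaching guarantee survives passing to that tail; when the machinery of \cite{Mo22} is formulated for non-increasing agilities this is immediate, as a tail of a non-increasing agility is again a non-increasing agility of infinite total length, which explains the restriction to decreasing agilities in that remark. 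Once these conventions are fixed, both inequalities are short, and the only genuine input beyond routine bookkeeping is the cited finitely-many-steps result.
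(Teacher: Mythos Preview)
The paper does not give its own proof of this theorem; it is quoted from \cite{Mo22}, and the paragraph preceding the statement offers only the informal ``restart with $\varepsilon,\varepsilon/2,\varepsilon/3,\dots$'' sketch for one direction. Your two-inequality reconstruction is correct and is in fact tidier than that sketch: for $c(X)\le\sup_\varepsilon c_\varepsilon(X)$ you observe that a single finite $\varepsilon$-approach already forces the infimum defining $v_\tau$ below $\varepsilon$, so no restarting is needed; for the reverse inequality you invoke the finitely-many-steps result from \cite{Mo22} exactly as the paper does.

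The one point worth flagging---and you do flag it---is definitional. As written in this paper, the $\varepsilon$-approaching game asks the cops to come within $\ValC(\tau)+\varepsilon$ of the robber, a target that depends on the number of cops through $\ValC(\tau)$; read literally, the cited finite-steps result then says \emph{every} number of cops achieves this, which would collapse $c_\varepsilon(X)$ to $1$ and trivialize the theorem. The intended meaning, consistent with the theorem and with \cite{Mo22}, is the absolute threshold $\varepsilon$ that you use. Once that convention is fixed, your argument is complete, and the only external input is the finitely-many-steps lemma.
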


\subsection{Guarding}

Let $X$ be the game space and let $A \subseteq X$. The \emph{shadow} of the robber is $\sigma \colon X \to X$ such that $\sigma |_{A} = id |_A$ and $d(x, y) \geq d(\sigma(x), \sigma(y))$. Note that the second condition is also referred to as $\sigma$ being a \emph{$1$-Lipschitz function}. If a cop comes to the point $\sigma(r)$, where $r$ is the robber's position, then we say that the cop \emph{caught the shadow} $\sigma(r)$ of the robber. It is clear that once the cop is in the robber's shadow, he can stay in the shadow forever. Thus, if the robber enters $A$, then the cop will catch the robber. So we say that the cop \emph{guards} $A$.

\begin{lemma}
\label{lem:subset}
Let $B$ and $M$ be geodesic spaces, and let $B \subseteq M$ be such that there exists a $1$-Lipschitz mapping $\sigma \colon M \to B$ with the property $\sigma|_B = id|_B$. Then $c(M) \geq c(B)$ and $c_0(M) \geq c_0(B)$.
\end{lemma}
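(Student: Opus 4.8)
The plan is to convert a winning (resp.\ catching) strategy for $k$ cops on $M$ into one for $k$ cops on $B$, which immediately yields $c(B)\le c(M)$ and $c_0(B)\le c_0(M)$ (both trivial if the right-hand side is $\infty$). Fix such a strategy on $M$. Suppose that in the game on $B$ the robber has chosen an agility $\tau$, an initial robber position $r^0\in B$, and initial cop positions $c^0\in B^k$. The cops maintain, as bookkeeping, an auxiliary (virtual) game of Cops and Robber on $M$ with the same agility $\tau$, in which the virtual robber starts at $r^0$, the $k$ virtual cops start at $c^0$ (all these being points of $M$, since $B\subseteq M$), and the virtual cops follow the fixed strategy on $M$. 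The two games are coupled by two rules: in every step the virtual robber repeats the move just made by the real robber, and once the virtual cops answer with positions $\tilde c_1^n,\dots,\tilde c_k^n\in M$, the real cops move to $c_i^n:=\sigma(\tilde c_i^n)\in B$ for each $i\in[k]$.

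First I would check, by induction on $n$, that all these moves are legal. For the virtual robber, $d_B(r^{n-1},r^n)\le\tau(n)$, and since the intrinsic metric of a subspace dominates the restriction of the ambient metric we have $d_M(r^{n-1},r^n)\le d_B(r^{n-1},r^n)\le\tau(n)$, so repeating this move is legal in $M$. For the real cops, $1$-Lipschitzness of $\sigma$ gives
\[
 d_B(c_i^{n-1},c_i^n)=d_B\big(\sigma(\tilde c_i^{n-1}),\sigma(\tilde c_i^n)\big)\le d_M(\tilde c_i^{n-1},\tilde c_i^n)\le\tau(n),
\]
so each real cop moves a legal amount in $B$ (realizable along a geodesic of $B$, as $B$ is geodesic). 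The base case is consistent since $c_i^0=\sigma(\tilde c_i^0)=\sigma(c_i^0)=c_i^0$, using $c_i^0\in B$ and $\sigma|_B=id$.

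Then I would compare the outcomes. At every step $n$ and for every $i\in[k]$, since $r^n\in B$, $\sigma|_B=id$, and $\sigma$ is $1$-Lipschitz,
\[
 d_B(r^n,c_i^n)=d_B\big(\sigma(r^n),\sigma(\tilde c_i^n)\big)\le d_M(r^n,\tilde c_i^n),
\]
whence $\inf_{n}\min_{i}d_B(r^n,c_i^n)\le\inf_{n}\min_{i}d_M(r^n,\tilde c_i^n)$. As the virtual cops follow a winning strategy on $M$, this virtual play has value $0$, so the right-hand side is $0$ and the cops win the game on $B$; hence $c(M)\ge c(B)$. If moreover the fixed $M$-strategy is a catching strategy, then the virtual game halts at some step $n$ with $\tilde c_i^n=r^n$ for some $i$, and then $c_i^n=\sigma(\tilde c_i^n)=\sigma(r^n)=r^n$, so at that same step the $i$-th real cop sits on the robber; hence $c_0(M)\ge c_0(B)$.

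The step I expect to need the most care is the legality of the virtual robber's move, which relies on $d_M\le d_B$ on $B$ — precisely the requirement that the inclusion $B\hookrightarrow M$ be distance non-increasing. This holds in the standard reading where $B$ carries the intrinsic metric it inherits from $M$; combined with the $1$-Lipschitz retraction $\sigma$ it actually forces $B$ to be isometrically embedded in $M$, although only this one inequality is used. A minor formal point is that the cops' composite strategy remembers the virtual positions $\tilde c_i^n$; this is harmless, because the virtual game is wholly determined by what the cops observe (the robber's successive positions together with their own past positions, which are the $\sigma$-images of the virtual ones), so it can be folded into a single strategy.
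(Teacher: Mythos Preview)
Your proof is correct and takes essentially the same approach as the paper: simulate a virtual game on $M$ in which the virtual robber copies the real robber, let the virtual cops play the given winning/catching strategy, and project their positions to $B$ via $\sigma$, using the $1$-Lipschitz property both to check legality of the real cops' moves and to bound the value. Your write-up is considerably more careful than the paper's three-sentence sketch; in particular, your observation that the virtual robber's move is legal because $d_M\le d_B$ on $B$ (and that together with the $1$-Lipschitz retraction this in fact forces $d_M=d_B$ on $B$) is a detail the paper leaves implicit.
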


Remark: Lemma \ref{lem:subset} gives a lower bound for $c(M)$. Further, if $k$ cops have a catch/win strategy on $B$, the strategy can be used for guarding $B$ in $M$.

\begin{proof}
Let $c(M) = k$. We describe a strategy which ensures that $k$ cops can win a game on $B$. Cop's imagine that the game is played on $M$, they make their optimal moves in $M$, and then use $\sigma$ to determine their moves in $B$. Since $\sigma$ is well-defined and $1$-Lipschitz, the mapped movements of the cops are legal, and if $k$ cops can win the game on $M$, their images also win on $B$. The same argument works for catching the robber.
\end{proof}

\subsection{The $n$-dimensional ball, sphere, and torus}

Let $B^n = \{(x_1, \ldots, x_{n-1}, z) = (\underline{x}, z) \in \mathbb{R}^n\, | \;x_1^2 + \cdots + x_{n-1}^2 + z^2 \leq 1\}$ be the $n$-dimensional ball, $n \geq 1$. Let $\pr_2$ be the projection on the $z$-coordinate, i.e., $\pr_2 (\underline{x}, z) = z$, and let $\pr_1$ be the projection on the first $n-1$ coordinates, i.e., $\pr_1 (\underline{x}, z) = \un{x}$.

Let $S^n = \{(x_1, \ldots, x_n, z) = (\underline{x}, z) \in \mathbb{R}^{n+1}\, | \;x_1^2 + \cdots + x_n^2 + z^2 = 1\}$ be the $n$-dimensional sphere, $n \geq 1$. Let $S^n_+$ and $S^n_-$ denote the upper and lower hemisphere, respectively. Let $N = (0, \ldots, 0, 1)$ be the north pole. Note that the projections $\pr_2$ and $\pr_1$ can be used on the sphere as well. (We simply use $\pr_1$ to denote the projection on all but the last coordinate.)

The \emph{great circle} on $S^n$ is the intersection of $S^n$ with a 2-plane that passes through the origin in the Euclidean space $\mathbb{R}^{n+1}$. Observe that the intersection $S^n \cap \{(\un{x}, z)\mid z = z_0\}$ is an $(n-1)$-dimensional sphere of radius $\sqrt{1 - z_0^2}$. If $d(N, (\un{x}, z)) = t \leq \frac{\pi}{2}$, then $ \Vert \pr_1(\un{x}, z) \Vert  = \sin t$ and $|\pr_2(\un{x}, z)| = \cos t$. Points $(\un{y}, w)$ on the geodesic between $N$ and $(\un{x}, z)$ have the property that $\pr_1(\un{y}, w) = \alpha \un{x}$, where $\alpha \in [0, 1]$.

Let $T^n = S^1 \times \cdots \times S^1$ denote the $n$-dimensional flat torus, $n \geq 1$. Equivalently, $T^n$ is the quotient of $\mathbb{R}^n$ under integer shifts in all $n$ directions.

\subsection{The game played on a disk}

While Besicovitch's strategy for the Lion and Man problem on the disk shows that man can escape a lion inside a circular arena~\cite{Li86},
a similar argument can be used to prove that one cop suffices to win our version of the game on a disk (the cop does not catch the robber, but gets arbitrarily close). Note that this result is later generalized (see the proof of Proposition~\ref{prop:radial1}). We first prove the following lemma.

\begin{lemma}
\label{lem:ball-guard}
Let the cops and robber game be played on the unit disk $B^2$. If cop $c$ is on the line between the center $O$ of $B^2$ and the position of the robber $r$, then the cop can guard the disk of radius $d(O, c)$ and center $O$.
\end{lemma}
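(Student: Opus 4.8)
The plan is to exhibit an explicit shadow function and then invoke the guarding principle from the \textbf{Guarding} subsection. Write $\rho = d(O,c)$ and let $D_\rho = \{x \in B^2 \mid \Vert x \Vert \le \rho\}$ be the target disk. Since the disk carries the Euclidean metric (it is convex, so the intrinsic metric is the Euclidean one), geodesics are straight segments. Because $c$ lies on the segment $Or$ at distance $\rho$ from $O$, we necessarily have $\Vert r\Vert \ge \rho$ and $c = \rho\, r/\Vert r\Vert$ (the degenerate case $r = O$, forcing $c=O$ and $\rho = 0$, being trivial).

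First I would define the candidate shadow $\sigma \colon B^2 \to D_\rho$ to be the nearest-point (radial) projection onto $D_\rho$: set $\sigma(x) = x$ when $\Vert x\Vert \le \rho$, and $\sigma(x) = \rho\, x/\Vert x\Vert$ when $\Vert x\Vert > \rho$. Then $\sigma|_{D_\rho} = \operatorname{id}$, and by the observation above $\sigma(r) = c$, so the cop starts exactly on the robber's shadow.

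The one substantive point is to check that $\sigma$ is $1$-Lipschitz on $B^2$. This is the standard fact that the metric projection onto a closed convex subset of Euclidean space is nonexpansive; I would either cite it or derive it in one line from the variational inequality $\langle x - \sigma(x),\, y' - \sigma(x)\rangle \le 0$ for all $y' \in D_\rho$, applied at $x$ and at $y$, added together, and combined with the Cauchy--Schwarz inequality. If a bare-hands argument is preferred instead, the only case requiring computation is $\Vert x\Vert, \Vert y\Vert > \rho$, which follows from a short estimate with the law of cosines; the mixed and interior cases are immediate.

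With $\sigma$ in hand the lemma follows from the guarding argument: if after step $n-1$ the cop occupies $\sigma(r^{n-1})$ and the robber moves to $r^n$ with $d(r^{n-1}, r^n) \le \tau(n)$, then $d(\sigma(r^{n-1}), \sigma(r^n)) \le d(r^{n-1}, r^n) \le \tau(n)$, so the cop may move to $\sigma(r^n)$; inductively the cop occupies $\sigma(r^n)$ after every step. Since $\sigma$ is the identity on $D_\rho$, the moment $r^n \in D_\rho$ we get $c^n = \sigma(r^n) = r^n$ and the robber is caught. Hence the cop guards $D_\rho$, as claimed in Lemma~\ref{lem:ball-guard}. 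The main (indeed only) obstacle is the $1$-Lipschitz verification, which is routine once one recognizes $\sigma$ as the projection onto a convex set.
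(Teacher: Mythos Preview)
Your argument is correct, and it is genuinely different from the paper's. You plug directly into the paper's own \emph{shadow/guarding} framework: define $\sigma$ as the nearest-point projection onto the closed convex disk $D_\rho$, observe that $\sigma(r)=c$ at the start, and invoke the standard nonexpansiveness of metric projection to get the $1$-Lipschitz condition. From there the guarding principle finishes the job immediately.

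The paper instead argues by hand, with a two-case geometric analysis. If the robber's step enters the current ball $B_c$, an isosceles-triangle angle chase shows the cop can reach the entry point on $\partial B_c$ with agility to spare and then follow the robber in. If the robber's step stays outside, the cop moves to the \emph{similar-triangle} point $c'$ on the ray $Or_2$ (so $|Oc'|=|Oc|\cdot|Or_2|/|Or|$), preserving the invariant ``cop on the segment $Or$'' but with a new radius $d(O,c')$. Note that this is \emph{not} the radial projection you use: the paper's cop does not stay on $\partial D_\rho$, and the guarded disk changes size step to step. Your approach is cleaner and proves the lemma exactly as stated (the fixed disk $D_\rho$ is guarded), at the cost of importing the convex-projection fact; the paper's approach is self-contained elementary geometry and, because the cop's radius is allowed to grow, dovetails with the monotone-limit argument used in the subsequent proof of Proposition~\ref{prop:disk}.
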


\begin{proof}
Let $B_c$ be the ball of radius $d(O, c)$. Suppose that the robber moves from $r \notin B_c$ to $r_1 \in B_c$. Let $r'$ be the first point on his trajectory that lies in $B_c$. Then $\triangle O c r'$ is an isosceles triangle, so $\angle r' c O < \frac{\pi}{2}$, thus $\angle r c r' > \frac{\pi}{2}$, thus $|r r'| > |c r'|$. Thus in the cop's move, he first moves from $c$ to $r'$, and then he has enough agility left to catch the robber in $r_1$. 

Suppose the robber moves from $r$ to $r_2 \notin B_c$. Let $c'$ be the point on the line between $O$ and $r_2$ such that $\triangle O c c'$ and $\triangle O r r_2$ are similar (thus $|c c'| < |r r_2|$). Then the cop can end his move on $c'$. So after the robber's move, the cop maintains the position on the line between $O$ and the robber and is guarding the ball with center $O$ and radius $d(O, c')$.
\end{proof}

Next, we prove that one cop can win the game on a disk.

\begin{proposition}
\label{prop:disk}
Let $B^2$ be a 2-dimensional disk. Then $c(B^2) = 1$.
\end{proposition}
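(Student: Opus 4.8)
Since a game with no cops is trivially won by the robber, $c(B^2)\ge 1$ is clear and the whole point is that one cop wins. The plan is to let the single cop first walk to the centre $O$ of the disk and then run a radial/guarding strategy based on Lemma~\ref{lem:ball-guard}: after each step the cop stays on the segment $Or$ from $O$ to the robber and pushes outward as far as the agility allows. I will show the guarded disk expands fast enough that, for every $\varepsilon>0$, the cop comes within distance $\varepsilon$ of the robber after finitely many steps, no matter what the robber and the agility $\tau$ do; hence $\ValC=0$ and one cop wins. (One could instead feed this into Theorem~\ref{thm:eps-game}, but the argument yields the conclusion directly.)

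First I would move the cop straight toward $O$. Because the robber must pick an agility with $\sum_{n\ge1}\tau(n)=\infty$ and $\diam(B^2)=2$, the cop reaches $O$ after finitely many steps; I then re-index the game so that the cop sits at $O$. Trivially $O$ lies on the segment from $O$ to the robber, so by Lemma~\ref{lem:ball-guard} the cop guards $\{O\}$, and from then on the cop maintains the invariant that after his $n$th move he lies on the segment $Or^n$ and guards the disk of radius $\rho_n:=d(O,c^n)$. By the guarding property the robber can never enter this disk without being caught, so we may assume $R_n:=d(O,r^n)>\rho_n$ for all $n$ (in particular $c^n$ is a genuine interior point of the segment), and Lemma~\ref{lem:ball-guard} applies afresh at each step.

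The heart of the matter is the cop's move in step $n$. Write the robber's previous and new positions in polar coordinates as $(R_{n-1},\theta_{n-1})$ and $(R_n,\theta_n)$, put $\delta_n=d(r^{n-1},r^n)\le\tau(n)$, and note the cop currently sits at $(\rho_{n-1},\theta_{n-1})$. Dilating the disk about $O$ by the factor $\rho_{n-1}/R_{n-1}\le 1$ is a Lipschitz self-map taking the old robber position to the cop's current position, so the cop can reach $(\rho_{n-1}R_n/R_{n-1},\theta_n)$ at cost $(\rho_{n-1}/R_{n-1})\delta_n\le\delta_n\le\tau(n)$, and may then spend his remaining budget moving radially outward along the ray through $\theta_n$. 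Since the cop ends on the segment $Or^n$ (or, if he would overshoot $r^n$, the game has already ended in his favour), the invariant is restored, and I obtain the recursion
\[
  \rho_n \;\ge\; \rho_{n-1}\frac{R_n}{R_{n-1}} \;+\; \tau(n) \;-\; \frac{\rho_{n-1}}{R_{n-1}}\,\delta_n .
\]

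Finally I would convert this into a contradiction. Fix $\varepsilon>0$ and suppose that $\rho_n/R_n>1-\varepsilon$ holds at no step. After the $n$th move the cop and the robber lie on the common ray through $\theta_n$, so $\rho_n/R_n>1-\varepsilon$ would give $d(r^n,c^n)=R_n-\rho_n<\varepsilon R_n\le\varepsilon$; hence our assumption forces $\rho_{n-1}/R_{n-1}\le 1-\varepsilon$ for every $n$. Using $\delta_n\le\tau(n)$ and $\delta_n\ge R_n-R_{n-1}$, the recursion gives $\rho_n-\rho_{n-1}\ge\tau(n)-(1-\varepsilon)\bigl(\delta_n-(R_n-R_{n-1})\bigr)\ge\varepsilon\,\tau(n)+(1-\varepsilon)(R_n-R_{n-1})$. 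Summing over the first $N$ (re-indexed) steps, with $\rho_0=0$ and $R_N-R_0\ge-1$, yields $\rho_N\ge\varepsilon\,T_N(\tau)-1$, which exceeds $1$ once $T_N(\tau)>2/\varepsilon$ — impossible since $\rho_N\le1$ while $T_N(\tau)\to\infty$. So the cop does come within $\varepsilon$ of the robber after finitely many steps; as $\varepsilon$ and $\tau$ are arbitrary, $\ValC=0$ and $c(B^2)=1$. I expect the main obstacle to be the estimate in the third paragraph — pinning down the exact cop move that makes the recursion work — together with checking that the argument survives the case where the robber moves toward $O$ and $\rho_n$ momentarily decreases; the point is that the final computation uses only $\rho_N\le1$ and $\rho_{n-1}\le R_{n-1}$, never monotonicity of $(\rho_n)$.
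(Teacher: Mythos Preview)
Your argument is correct and follows the same high-level radial strategy as the paper --- the cop first walks to the centre and thereafter stays on the segment $Or^n$ --- but your analysis of that strategy is genuinely different in execution and somewhat cleaner. The paper has the cop move only the robber's step length $\delta_n$, then argues via limits: it assumes $R_n=d(O,r^n)$ is increasing (invoking the guarding lemma to justify this), sets $\ell_R=\lim R_n$ and $\ell_C=\lim \rho_n$, and when $\ell_R>\ell_C$ derives a contradiction by an informal ``full-circle'' circumference comparison. You instead let the cop spend the full budget $\tau(n)$, extract the explicit recursion $\rho_n\ge\rho_{n-1}R_n/R_{n-1}+\tau(n)-(\rho_{n-1}/R_{n-1})\delta_n$ from the dilation map, and telescope to $\rho_N\ge\varepsilon T_N(\tau)-1$. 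This buys you an explicit time bound ($T_N>2/\varepsilon$ suffices), and --- as you note --- removes any need for monotonicity of $R_n$ or $\rho_n$; it also sidesteps the slightly hand-wavy circumference estimate in the paper's contradiction step. One minor remark: your invocation of Lemma~\ref{lem:ball-guard} to justify ``$R_n>\rho_n$'' is not really needed, since under your strategy the cop either catches the robber at step $n$ or lands strictly inside the segment $Or^n$ by construction; the lemma is harmless here but superfluous.
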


\begin{proof}
At the beginning of the game, the robber chooses agility $\tau$ with $\sum_n \tau(n) = \infty$, and an initial position of both players. Let $O$ denote the center of the disk $B^2$, and let $r_n$ and $c_n$ denote positions of the robber and the cop after the $n$-th move, respectively.

The cop's strategy is to first move to the center $O$ of the disk, and in the next steps to move for the same distance as the robber while ending his move on the line connecting the center of the disk and current robber's position while getting as close to robber as possible (see Figure~\ref{fig:onemove}).
By Lemma~\ref{lem:ball-guard}, we may assume that $d(O, r_n)$ is increasing (otherwise the cop gets closer to the robber even sooner), and since $d(O, r_n) \leq 1$, the sequence converges, say $\ell_R = \lim_{n \to \infty} d(O, r_n)$.

\begin{figure}[htb]
    \centering
    \begin{tikzpicture}
    \draw (0,0) -- (6,0);
    \draw (0,0) -- (6,2.078460969);
    \filldraw[black] (0,0) circle (2pt) node[label=left:$O$] (o) {};
    \filldraw[black] (4,0) circle (2pt) node[label=below:$r_n$] (rn) {};
    \filldraw[black] (5, 1.73205) circle (2pt) node[label=above:$r_{n+1}$] (rn1) {};
    \filldraw[black] (2,0) circle (2pt) node[label=below:$c_n$] (cn) {};
    \filldraw[black] (3.57143, 1.23718) circle (2pt) node[label=above:$c_{n+1}$] (cn1) {};
    \draw[thick, ->] (rn) -- (rn1);
    \draw[thick, ->] (cn) -- (cn1);
    \end{tikzpicture}
    \caption{One move in the cop's strategy.}
    \label{fig:onemove}
\end{figure}
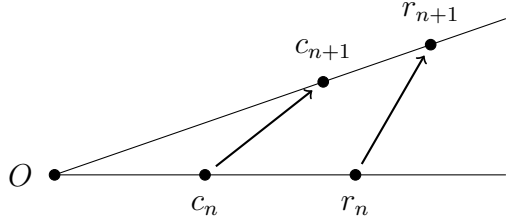

Thus $d(O,c_n)$ is also increasing for all $n \geq N_0$, where $N_0$ is the step of the game in which the cop reaches $O$. Thus $d(O,c_n)$ converges, say $\ell_C = \lim_{n \to \infty} d(O, c_n)$. If $\ell_R = \ell_C$, then the cop can get arbitrarily close to the robber, hence he wins the game.

Suppose that $\ell_R > \ell_C$. Take $\eps = \pi (\ell_R - \ell_C)$.  (Notice that $\eps > 0$ since $0 < \ell_C < \ell_R \leq 1$.) Then there exists $N_1 \in \mathbb{N}$ such that for all $n \geq N_1$ we have $d(O, c_n) \geq \ell_C - \eps$ and $d(O, r_n) \geq \ell_R - \eps$. There also exists $M_1 \in \mathbb{N}$ such that $\sum_{i = N_1}^{M_1} \tau(i) \geq \eps + 2 \pi \ell_R$. In steps $i = N_1, \ldots, M_1$, $ \Vert d(O, r_n) \Vert  \in [\ell_R - \eps,  \ell_R]$, so the robber travels the distance of at least one full circle of circumference $2 \pi \ell_R$ in these steps. But since $ \Vert d(O, c_n) \Vert  \in [\ell_C - \eps, \ell_C]$ and the cop ends each of his moves on the line between $O$ and the robber, he only needs to move for at most $2 \pi \ell_C$ to ensure this. Thus he can move a bit closer to the robber in each step, and in steps $N_1, \ldots, M_1$ he gains at least $2 \pi \ell_R - 2 \pi  \ell_C$. Hence $$d(O, c_{M_1}) \geq d(O, c_{N_1}) + 2 \pi (\ell_R - \ell_C) \geq \ell_C - \eps + 2 \eps = \ell_C + \eps > \ell_C,$$ which is a contradiction.
\end{proof}

\section{Higher-dimensional balls}

The main idea when studying the Cops and Robber Game on the ball originates from Proposition~\ref{prop:disk}. Man's strategy is to move away from the lion, perpendicular to the line connecting his position and the center of the arena. The strategy ensures that the man is never caught, but it turns out that the lion can get arbitrarily close to the man. We use this idea to prove that one cop also suffices to win the game on the $n$-dimensional ball.

\begin{theorem}
\label{thm:n-ball}
If $n \geq 1$, then $c(B^n) = 1$.
\end{theorem}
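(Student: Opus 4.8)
The plan is to run the single-cop strategy of Proposition~\ref{prop:disk} in every dimension, exploiting that each round of the game only ``sees'' a $2$-dimensional slice of $B^n$. First I would let the cop spend finitely many moves walking to the centre $O$ of $B^n$ (possible since $\sum_n\tau(n)=\infty$); say it reaches $O$ after step $N_0$. From then on, after the robber reveals its new position $r_n$, the cop moves to the point of the segment $[O,r_n]$ that is at distance $d(O,c_{n-1})$ from $O$ --- keeping it in the robber's shadow --- and then spends whatever is left of its budget $\tau(n)$ moving outward along $[O,r_n]$ towards $r_n$, getting as close to the robber as it can. Write $\rho_n=d(O,c_n)$ and $R_n=d(O,r_n)$.

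The first thing to check is the higher-dimensional version of Lemma~\ref{lem:ball-guard}: while the cop stays on $[O,r]$, it guards $B(O,d(O,c))$ and $\rho_n$ never decreases. This reduces to the planar statement, because the robber's step from $r_{n-1}$ to $r_n$ is a segment and, together with $O$ and $c_{n-1}\in[O,r_{n-1}]$, lies in a single $2$-plane $P$ through the origin (a line in degenerate positions), in which $B^n\cap P$ is a unit disk and Lemma~\ref{lem:ball-guard} applies verbatim. Concretely: if the robber steps into $B(O,\rho_{n-1})$, the planar argument of Lemma~\ref{lem:ball-guard} (the obtuse angle at $c_{n-1}$) lets the cop reach it within the same move, so the cops win; otherwise $\rho_{n-1}\le\min(R_{n-1},R_n)$, and by the elementary inequality $\rho\,\|\widehat x-\widehat y\|\le\|x-y\|$ --- valid whenever $0<\rho\le\min(\|x\|,\|y\|)$, with $\widehat x=x/\|x\|$, and proved by expanding both sides and using $\|x\|\|y\|\ge\rho^2$ --- the cop can reach the point $\rho_{n-1}\widehat{r_n}\in[O,r_n]$ at cost at most $\|r_{n-1}-r_n\|\le\tau(n)$, and then moves outward. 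Thus $\rho_n$ is nondecreasing for $n\ge N_0$ and bounded by $1$, so it converges to some $\ell_C$; and $\rho_n\le R_n$ for $n\ge N_0$, so the cop's distance from the robber at step $n\ge N_0$ equals $R_n-\rho_n$.

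It remains to show that $\inf_{n}(R_n-\rho_n)=0$; equivalently, writing $\ell_R=\liminf_n R_n$ (note $\ell_R\ge\ell_C$), that $\ell_R=\ell_C$. Suppose $\ell_R>\ell_C$. Here the argument genuinely departs from Proposition~\ref{prop:disk}: rather than a ``circumference'' argument, I would use that radial projection $x\mapsto\widehat x$ is $\tfrac{1}{\ell_R-\eps}$-Lipschitz on $\{\|x\|\ge\ell_R-\eps\}$. Fix $\eps$ with $0<\eps<\ell_R-\ell_C$ and $N_1\ge N_0$ with $R_n\ge\ell_R-\eps$ and $\rho_n\le\ell_C$ for all $n\ge N_1$. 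For $n\ge N_1$ the shadow-maintaining move costs at most $\rho_{n-1}\|\widehat{r_{n-1}}-\widehat{r_n}\|\le\frac{\rho_{n-1}}{\ell_R-\eps}\|r_{n-1}-r_n\|\le\frac{\ell_C}{\ell_R-\eps}\tau(n)$, so the cop has a surplus of at least $\beta\tau(n)$ with $\beta=1-\frac{\ell_C}{\ell_R-\eps}>0$, which it converts into an increase of $\rho_n$ (or it overshoots and catches the robber, and we are done). Hence $\rho_n\ge\rho_{n-1}+\beta\tau(n)$ for all $n\ge N_1$, forcing $\rho_n\to\infty$ since $\sum_n\tau(n)=\infty$ --- contradicting $\rho_n\le1$. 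So $\ell_R=\ell_C$, the cop gets arbitrarily close to the robber, and $c(B^n)=1$ (the inequality $c(B^n)\ge1$ is trivial, or follows from Lemma~\ref{lem:subset} with the orthogonal projection $\RR^n\to\RR^2$ restricted to $B^n$).

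I expect the main obstacle to be the bookkeeping: the plane $P$ changes every round, so the whole argument must be expressed through the rotation-invariant quantities $\rho_n$ and $R_n$, and one must verify that maintaining the shadow and then spending the surplus are jointly feasible within the single budget $\tau(n)$. The one genuinely new estimate compared with the disk is the Lipschitz/surplus bound (equivalently $\rho\,\|\widehat x-\widehat y\|\le\|x-y\|$ on the relevant annulus), which is what makes the cop strictly gain ground each round when $\ell_R>\ell_C$. The degenerate cases --- $n=1$, or the robber starting at or never leaving $O$ --- only make the cop's task easier and are handled separately.
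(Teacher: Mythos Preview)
Your proof is correct and follows the same approach as the paper: reduce each round to the $2$-plane through $O$, $c_{n-1}$, and $r_n$, and run the radial strategy of Proposition~\ref{prop:disk}. The paper's proof is literally this one-line reduction to Proposition~\ref{prop:disk}; your explicit Lipschitz estimate $\rho\,\|\widehat x-\widehat y\|\le\|x-y\|$ is the same similar-triangles inequality driving the circumference argument there, just phrased in rotation-invariant form, so while your write-up is more self-contained, the route is essentially identical.
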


\begin{proof}
Cop's strategy is to first move to the center $O$ of $B^n$. Suppose that after $k$ steps the cop is positioned in $c^k$ which lies on the line between $O$ and the robber's position is $r^k$. Then the line $O r^k$ and the robber's position $r^{k+1}$ in the next move span a 2-dimensional subspace in $B^n$ which is isometric to $B^2$. So the cop can use the strategy explained in Proposition~\ref{prop:disk} on every step to get arbitrarily close to the robber.
\end{proof}

On the other hand, it turns out that $n$ cops are needed to catch the robber on $B^n$. The following result is analogous to the result for a pursuit-evasion game played on the $n$-dimensional ball which was studied by Croft~\cite{Croft64}. In his variation of the game birds are trying to catch a fly in $B^n$, where all animals have the same maximal speed. We include the proof since our version of the game is defined differently.

\begin{theorem}
\label{thm:ball-catch}
If $n \geq 1$, then $c_0(B^n) = n$.
\end{theorem}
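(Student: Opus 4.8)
The plan is to prove the two inequalities $c_0(B^n)\le n$ and $c_0(B^n)\ge n$ separately. For the upper bound, I would argue by induction on $n$, using the guarding machinery of Lemma~\ref{lem:subset} together with a ``boundary-sweeping'' argument. The base case $n=1$ is trivial: $B^1$ is a segment, and one cop walking toward the robber will catch him. For the inductive step, I would have one cop move onto the hyperplane $H=\{z=\pr_2(r)\}$ through the robber's current height (strictly speaking, onto the shadow given by the $1$-Lipschitz nearest-point projection of $B^n$ onto $B^n\cap H$), and then \emph{track} the robber's $z$-coordinate. The key geometric point is that as the robber changes $z$ by at most $\tau(n)$, the cop's required move inside $B^n$ is also at most $\tau(n)$ (this uses that the nearest-point projection onto a slice is $1$-Lipschitz, exactly the hypothesis of Lemma~\ref{lem:subset}); so the single cop can stay in the robber's ``height-shadow'' forever. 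Once the robber is confined to a moving $(n-1)$-dimensional slice that the guarding cop controls, the remaining $n-1$ cops play the $c_0(B^{n-1})=n-1$ catching strategy inside that slice (shrinking the slice as needed), and by induction they catch the shadow of the robber inside the slice; combined with the height-tracking cop this pins the robber down.

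For the lower bound $c_0(B^n)\ge n$, I would show that $n-1$ cops cannot catch the robber, by exhibiting an escape strategy. The idea, following Croft's birds-and-fly argument, is a \emph{dimension count}: at any moment the robber is at some interior point $r$, and the $n-1$ cops occupy points $c_1,\dots,c_{n-1}$. If none of them is exactly at $r$, then the $n-1$ vectors $c_i-r$ span a subspace of dimension at most $n-1$, so there is a unit vector $u\in\mathbb{R}^n$ orthogonal to all of them (and if some $c_i=r$ we are already in trouble, but we arrange the agility and starting position so this never happens; more carefully, the robber always keeps a positive distance from the boundary and from every cop). Moving the robber a small amount $\tau(n)$ in the direction $\pm u$ keeps his distance to each cop essentially unchanged to first order; the robber picks the sign so as not to run into the boundary. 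The technical content is to make this ``first-order'' statement quantitative: I would use a compactness/continuity argument, or directly estimate $d(r',c_i)^2 = d(r,c_i)^2 + \tau(n)^2 \pm 2\tau(n)\langle u, c_i-r\rangle = d(r,c_i)^2+\tau(n)^2 \ge d(r,c_i)^2$, so in fact the robber's distance to every cop \emph{strictly increases} after his move, and then the cops can decrease it by at most $\tau(n)$ on their move — one checks the net effect keeps a uniform positive lower bound on $\min_i d(r,c_i)$ provided the robber starts far enough from everyone (e.g.\ he starts at the center with all cops on the boundary).

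The main obstacle I anticipate is the boundary: the robber lives in a compact ball, so he cannot move in a fixed direction forever, and the orthogonal direction $u$ may point outward. The fix is that $u$ is only constrained to be orthogonal to an $(n-1)$-dimensional space, so there is a whole great circle of valid directions in the orthogonal complement when $n\ge 2$... actually the orthogonal complement of $n-1$ vectors in $\mathbb{R}^n$ is (generically) a line, so I get $\pm u$, and I must argue that at least one of the two choices does not decrease $\|r\|$ too fast, or better, I should choose the robber's strategy to also incorporate a mild ``radial-avoidance'' component as in Proposition~\ref{prop:disk} — move in the $u$-direction that has nonnegative inner product with $-r$ (away from the boundary), which is always possible. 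Then $\|r\|$ is nonincreasing, so the robber never reaches the boundary, the interior-point hypothesis is preserved, and the distance estimate above goes through for all time. Verifying that these two requirements (orthogonality to the cops, and pointing away from the boundary) are simultaneously satisfiable for $n-1$ cops in $\mathbb{R}^n$ is the crux, and it works precisely because $n-1 < n$.
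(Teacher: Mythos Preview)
Your lower bound is essentially the paper's argument (move orthogonally to the $n-1$ vectors $c_i-r$, choosing the sign of $u$ that points weakly inward), and your distance estimate $d(r',c_i)^2=d(r,c_i)^2+\tau^2$ correctly shows no cop can reach $r'$ in that step, which is all that is needed for $c_0$. However, your claim that ``$\|r\|$ is nonincreasing'' is false: with $\langle u,r\rangle\le 0$ you only get $\|r+\tau u\|^2=\|r\|^2+2\tau\langle u,r\rangle+\tau^2\le\|r\|^2+\tau^2$, so $\|r\|$ can increase (and certainly does if you start at the centre). The paper handles this exactly as your inequality suggests: the robber also chooses the agility, taking $\tau(n)=\tfrac1{n+2}$ so that $\sum_n\tau(n)^2<1-\|r^0\|^2$, and then the telescoping bound keeps $r^k$ strictly inside $B^n$ for all $k$. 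Once you replace ``nonincreasing'' by this square-summable agility argument, your lower bound coincides with the paper's.

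Your upper bound, on the other hand, has a genuine gap. If one cop merely tracks the robber's $z$-coordinate while the other $n-1$ cops try to play the $B^{n-1}$ catching strategy inside the moving slice $\{z=\pr_2(r)\}$, the robber defeats you by oscillating purely in $z$: the $n-1$ slice-cops must spend their entire step following the slice up and down and never make progress in $\pr_1$, while the lone $z$-tracker accomplishes nothing (he is in the slice too, but with no assigned task there). Your phrase ``the guarding cop controls the slice'' has no content---the robber is trivially always in his own height-slice.

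The paper's inductive step uses a different division of labour together with a dichotomy. First $n-1$ cops catch the $\pr_1$-shadow of the robber in the fixed slice $\{z=0\}$; then \emph{one} cop maintains $\pr_1(c)=\pr_1(r)$ forever, spending any leftover agility to close the $z$-gap, while the remaining $n-1$ cops move into the moving slice $\{z=\pr_2(r)\}$ and use \emph{their} leftover agility to run the $B^{n-1}$ catching strategy there. One then splits the steps into $M_z=\{m:|\Delta z_m|\ge\tau(m)/2\}$ and $M_x$; since $\sum\tau(m)=\infty$, one of the two corresponding ``leftover'' series diverges. If it is the $M_z$-side, the single $\pr_1$-tracking cop closes the $z$-gap and catches (his $\pr_1$ already matches the robber's); if it is the $M_x$-side, the $n-1$ slice-cops have infinite total agility inside the slice and catch by induction. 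The point you are missing is that the single cop must track the \emph{hard} $(n-1)$-dimensional shadow (after an initial catching phase) so that his leftover agility is large precisely when the robber moves a lot in $z$.
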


\begin{proof}
We first prove that the robber has a strategy to escape $n-1$ cops from catching him (i.e.\ occupying the same point as the robber). The robber selects his initial position $(\frac12,0,\ldots,0)$ and positions all cops in $(\frac{1}{100},0,\ldots,0)$. He chooses agility $\tau \colon \mathbb{N} \to \mathbb{R}^+$ with $\tau(n) = \frac{1}{n+2}$.

Suppose that the robber is not caught after $k$ steps of the game. Denote players position after $k$th step as $(r^k, c_0^k, \ldots, c_{n-1}^k)$. Let $\Lambda$ be the hyperplane containing points $r^k, c_1^k, \ldots, c_{n-1}^k$. Construct a line $\lambda$, which contains $r^k$ and is perpendicular to $\Lambda$. Let $L$ be the point on $\lambda$ such that $LO \bot \lambda$. The robber moves for time $\tau(k+1)$ along $\lambda$ towards $L$ (and possibly beyond $L$) at maximum speed. If $L \in \Lambda$, the robber may choose his direction arbitrarily. Denote the new position of the robber by $r^{k+1}$. Clearly, no cop can reach the point $r^{k+1}$ in time $\tau(k+1)$. We still need to check that $r^{k+1} \in B^n$.

Denote $|O r^k| = r_k$, $|O r^{k+1}| = r_{k+1}$, and let $\alpha$ be the angle at $r^k$ in the triangle $\triangle O r^k r^{k+1}$. Due to the choice of the direction of robber's move, $\alpha \in [0, \pi/2]$. The law of cosines gives $$r_{k+1}^2 = r_k^2 + \tau(k+1)^2 - 2 r_k \tau(k+1) \cos \alpha \leq r_k^2 + \tau(k+1)^2.$$  Using induction this yields $$r_{k+1}^2 \leq \sum_{i = 1}^{k+1} \tau(i)^2 < \sum_{i = 1}^{\infty} \tau(i)^2 = \frac{\pi^2}{6} - \frac{5}{4} < 1,$$ so $r^{k+1} \in B^n$ and thus the robber is also not caught after $k+1$ moves.

Next, we prove that $n$ cops have a strategy to catch the robber using induction on $n$. For $n=1$, one cop can clearly catch the robber on $B^1 = [-1,1]$. Suppose $n$ cops can catch the robber on $B^n$ and observe the game on $B^{n+1}$. The robber chooses initial positions and agility function $\tau$. First, the cops $c_2, \ldots, c_{n+1}$ all move into the hyperplane with $z = 0$. By induction, these $n$ cops can catch the robber's projection $\pr_1(r)$ in $B^{n+1} \cap \{ z = 0 \}$. Now one cop, say $c_{n+1}$, keeps $\pr_1$ the same as robber's. If he has some agility left, then he also moves closer to the robber in $z$-coordinate. The cops $c_1, \ldots, c_n$ move to the hyperplane $z = \pr_2(r)$. They maintain their position in the hyperplane with the same $z$ coordinate as the robber has. If they have some agility left, they move according to their strategy to catch the robber in the $n$-dimensional ball $B^{n+1} \cap \{ z = \pr_2(r)\}$. 

Let $N_0$ be the step in which both above conditions are fulfilled. The robber either stays close to the $z=\pr_2(r)$ hyperplane or moves away from it. We can partition the set of indices $\{ N_0, N_0 + 1, \ldots \}$ into disjoint sets $M_x$ and $M_z$ in the following way: 
\begin{align*}
    M_z &= \left\{ m \geq N_0\, | \;d(\pr_2(r^{m+1}), \pr_2(r^m)) \geq \frac{\tau(m)}{2} \right\}, \\
    M_x & = \{ N_0, N_0 + 1, \ldots \} \setminus M_z.
\end{align*}
 Note that if $m \in M_x$, then $$d(\pr_1(r^{m+1}), \pr_1(r^m)) = \sqrt{\tau(m)^2 - d(\pr_2(r^{m+1}), \pr_2(r^m))^2} > \frac{\sqrt{3}}{2} \tau(m) > \frac{\tau(m)}{2}.$$ 
 If $\sum_{m \in M_x} d(\pr_1(r^{m+1}), \pr_1(r^m)) $ and $\sum_{m \in M_z} d(\pr_2(r^{m+1}), \pr_2(r^m))$ are finite, then $\sum_{m \geq N_0} \frac{\tau(m)}{2}$ is finite too.
 This implies that $\sum_{m \in \mathbb{N}} \tau(m)  < \infty$ (changed $m \in 1$ to $m$ integer), which is by definition not possible for an agility function $\tau$. Hence at least one of the sums $\sum_{m \in M_x} d(\pr_1(r^{m+1}), \pr_1(r^m))$ and $\sum_{m \in M_z} d(\pr_2(r^{m+1}), \pr_2(r^m))$ must be infinite. If it is the first one (resp.\ the second one), then the cops $c_1, \ldots, c_n$ (resp.\ the cop $c_{n+1}$) can by induction catch the robber. 
\end{proof}

\section{Generalized radial strategy}

The radial strategy used to determine the cop number of the $n$-dimensional ball can be generalized in the following way.

Let $X$ be a geodesic space and $x_0 \in X$ a fixed point. For $y \in X$, a \emph{ray} is a simple $(x_0, y)$-path $R = R(x_0, y)$, $R \colon [0, 1] \to X$, with $R(0) = x_0$, $R(1) = y$, and the property $d(x_0, R(t)) < d(x_0, R(t'))$ if $0 \leq t < t' \leq 1$. We say that $X$ is \emph{starshaped} at $x_0$ if $X = \bigcup_{y \in A} R(x_0, y)$ for some $A \subseteq X$, $R(x_0, y) \cap R(x_0, z) = \{x_0\}$ for every distinct $y, z \in A$, and $d(x_0,y)=d(x_0,z)$ for every $y,z\in A$.

For example, in $B^n$, setting $x_0$ to be the center of the ball, $A$ to be $\partial B^n$, and taking rays as straight lines from $x_0$ to every point in $A$, satisfies the starshaped condition.

Let $X$ be starshaped at $x_0$. Denote $C_d = \{ t \in X \, | \, d(x_0, t) = d \}$. For every $d, d' \in (0, d(x_0, A)]$, we can define a mapping $\varphi_{d',d} \colon C_{d'} \to C_d$. For every $y \in C_{d'}$ there exists exactly one ray containing $y$, say $R(x_0, y')$, $y' \in A$. There exists exactly one point on $R(x_0, y')$ at distance $d$ from $x_0$. Take this point to be $\varphi_{d',d}(y)$.
Notice that $\varphi_{d',d}^{-1} = \varphi_{d,d'}$. Sometimes we will use a simplified notation $\varphi_d(y) = \varphi_{d',d}(y)$ since $d' = d(x_0, y)$ is implicitly expressed with the choice of $y$.

For $d_1, d_2 \geq d$ define 
\begin{align*}
\delta(d_1, d_2, d) & = \inf_{y_1 \in C_{d_1}, y_2 \in C_{d_2}, y_1 \neq y_2} \frac{d(y_1, y_2) - d(\varphi_{d_1, d}(y_1), \varphi_{d_2, d}(y_2))}{d(y_1, y_2)} = \\
& = \inf_{y_1 \in C_{d_1}, y_2 \in C_{d_2}, y_1 \neq y_2} \left( 1 -  \frac{d(\varphi_{d_1, d}(y_1), \varphi_{d_2, d}(y_2))}{d(y_1, y_2)} \right).
\end{align*}

\begin{proposition}
\label{prop:radial1}
If $X$ is starshaped at $x_0$ and $\delta(d_1, d_2, d) > 0$ for every $d_1 \geq d_2 \geq d$, $d_1 > d$, then $c(X) = 1$.
\end{proposition}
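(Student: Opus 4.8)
The plan is to generalize the radial strategy from Proposition~\ref{prop:disk} and Theorem~\ref{thm:n-ball}: one cop first walks to the centre $x_0$ and then forever keeps himself on the robber's ray, pushing his radial distance $e_n := d(x_0,c^n)$ outward as much as each move permits. A few preliminaries are immediate. Since $X$ is compact it has finite diameter, so from $\sum_n \tau(n)=\infty$ the cop reaches $x_0$ after finitely many steps, say at step $N_0$, with $e_{N_0}=0$. Because the rays pairwise meet only at $x_0$ and cover $X$, every point of $X\setminus\{x_0\}$ lies on a unique ray, along which radial distance is continuous and strictly increases from $0$ to $d(x_0,A)$; hence $\varphi_{d',d}$ is defined for all $0<d\le d'\le d(x_0,A)$, depends continuously on its arguments, and the hypothesis $\delta(d_1,d_2,d)\ge 0$ (which also holds trivially when $d_1=d_2=d$) says exactly that each radial projection $\varphi_{\cdot,d}$ is $1$-Lipschitz.

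The heart of the argument is a generalized guarding statement: if after the robber's move the cop can occupy $\varphi_e(r)$ for some $e\le d(x_0,r)$, then the cop guards the inner ball $C_{\le e}=\{z:d(x_0,z)\le e\}$, meaning the robber is caught as soon as he ends a move inside $C_{\le e}$. This is proved exactly as Lemma~\ref{lem:ball-guard}: if the robber's trajectory stays in $\{d(x_0,\cdot)\ge e\}$, the cop moves to $\varphi_e$ of the robber's new position, which is a legal move by $1$-Lipschitzness; and if the trajectory first reaches level $e$ at a point $r^\ast$, then $\varphi_e(r^\ast)=r^\ast$, so the cop gets to $r^\ast$ spending at most the length of the robber's path up to $r^\ast$ (again by $1$-Lipschitzness), leaving him enough agility to traverse the remainder of the robber's path and land on the robber. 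Using this, the cop's rule ``move to $\varphi_{e_{n+1}}(r^{n+1})$ with $e_{n+1}$ as large as the move allows'' is well defined (taking $e_{n+1}=e_n$ is always feasible, by $\delta(\cdot,\cdot,e_n)\ge 0$), makes $e_n$ nondecreasing and bounded by $d(x_0,A)$, hence convergent to some $\ell_C$; and if the cop never catches the robber then $d(x_0,r^n)>e_{n-1}$ for all $n>N_0$, so $\ell_R := \liminf_n d(x_0,r^n)\ge \ell_C$.

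It remains to split into the two cases from Proposition~\ref{prop:disk}. If $\ell_R=\ell_C$, then along a subsequence realizing $\ell_R$ the radial distances of robber and cop converge to a common limit while the cop sits on the robber's ray, which forces $d(r^n,c^n)\to 0$ along that subsequence (transparent when the rays are isometric, and in general obtained from compactness of $X$ and continuity of the $\varphi$'s), so the cop wins. If $\ell_R>\ell_C$, I aim for a contradiction: fix $\rho\in(\ell_C,\ell_R)$ and $N_1$ with $d(x_0,r^n)>\rho$ for all $n\ge N_1$; since also $e_n\le\ell_C<\rho$ for all $n$, the triples $(d(x_0,r^n),d(x_0,r^{n+1}),e_n)$ with $n\ge N_1$ all lie in a compact set of parameters on which $\delta$ is bounded below by some $\delta_0>0$. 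Then the cop's cost of staying on the robber's ray at radius $e_n$ is at most $(1-\delta_0)\,d(r^n,r^{n+1})\le(1-\delta_0)\tau(n+1)$, so at least $\delta_0\tau(n+1)$ of his agility is free to push him outward along the robber's new ray; since $\sum_n\tau(n)=\infty$ while $e_n$ stays bounded by $d(x_0,A)$, accumulating these gains over a long enough block of steps forces $e_n$ to exceed $\ell_C$ — a contradiction. So in fact $\ell_R=\ell_C$ always, the cop wins, and $c(X)=1$.

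The work concentrates in the last case. First, one must promote the pointwise hypothesis $\delta(d_1,d_2,d)>0$ to a uniform bound $\delta_0>0$ on the relevant compact parameter set; I would do this by showing $\delta$ is monotone in its third argument via the composition identity $\varphi_{d_1,d}=\varphi_{d',d}\circ\varphi_{d_1,d'}$ (projecting to a deeper level can only contract more), which reduces matters to a single deepest level, and then establishing lower semicontinuity in $(d_1,d_2)$ from compactness of the level sets $C_{d_i}$ together with continuity of $\varphi$ and of the metric. Second, one must turn the cop's surplus agility into guaranteed radial progress; this is clear when the rays are isometric paths, since then arclength equals radial displacement — the situation in all our applications — and in general it should be arranged by choosing $e_{n+1}$ so that the cop always moves the full distance $\tau(n+1)$. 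Everything else is a direct transcription of the proof of Proposition~\ref{prop:disk}.
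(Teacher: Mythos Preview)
Your overall architecture is the same as the paper's: move the cop to $x_0$, guard the inner ball $C_{\le e}$ via the $1$-Lipschitz projections $\varphi_{\cdot,e}$, let $e_n\nearrow\ell_C$, and derive a contradiction from $\ell_R>\ell_C$ by showing the cop retains at least $\delta_0\tau(n)$ of surplus agility per step. The difference is in how the uniform constant $\delta_0>0$ is produced.

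The paper does \emph{not} argue via lower semicontinuity of $\delta$ on a compact parameter set. It simply fixes the single number $\delta_0=\delta\bigl(\tfrac{\ell_R+\ell_C}{2},\tfrac{\ell_R+\ell_C}{2},\ell_C\bigr)$ and reduces to it by an explicit double projection: the two consecutive robber positions $r,r'$ (both at level $\ge\tfrac{\ell_R+\ell_C}{2}$) are projected down to level $\tfrac{\ell_R+\ell_C}{2}$, which can only shrink $d(r,r')$; the cop positions $c,\overline c$ (both at level $e_n\le\ell_C$) are projected up to level $\ell_C$, which can only stretch $d(c,\overline c)$, being the inverse of a contraction. After these two projections the ratio $d(c,\overline c)/d(r,r')$ is bounded by an expression to which the definition of $\delta_0$ applies directly. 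Your monotonicity-in-$d$ argument via $\varphi_{d_1,d}=\varphi_{d',d}\circ\varphi_{d_1,d'}$ is correct and is essentially the second of these two projections; but your proposed lower-semicontinuity step for the first two variables is a genuine gap. Infima of continuous families are in general only upper semicontinuous, minimizing pairs $(y_1,y_2)$ can collapse to the diagonal where the defining ratio is $0/0$, and you are also quietly assuming that $\varphi$ is jointly continuous, which the starshape axioms do not give you. The paper's ``project the robber down to a fixed intermediate level $\rho$'' is exactly the clean replacement for your LSC step, and it is the same idea you already used for the third variable.

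Your second flagged issue (turning surplus agility into guaranteed radial gain when rays are not isometric) is real, and the paper glosses over it too: it asserts $d(x_0,c_{M_0})\ge d(x_0,c_{N_0})+(\ell_R-\ell_C)$ from moving ``along $R'$'' for a given arclength. In every application in the paper the rays are geodesics from $x_0$, so this is harmless, and your remark to that effect is accurate.
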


\begin{proof}
Let $\tau$, $\sum_n \tau(n) = \infty$, be the agility chosen by the robber. The cop's strategy is first to move to $x_0$ and then try to move closer to the robber, while ending each of his moves on the same ray $g$ as the robber is on.

First, we prove that if the cop is positioned at distance $d$ from $x_0$ while the robber is at distance more than $d$ from $x_0$ and they are both on the same ray $R$, then the cop can guard $C_d$. Notice that since $\delta(d_1, d_2, d) > 0$, we have $d(y_1, y_2) > d(\varphi_{d_1, d}(y_1), \varphi_{d_2, d}(y_2))$ for every $y_1 \in C_{d_1}, y_2 \in C_{d_2}$. Thus if the robber moves such that his distance from $x_0$ remains greater than $d$, then the condition from the statement ensures that the cop can end his move on $C_d$, again on the same ray $R$ as the robber is on. 

If the robber moves closer to $x_0$ than $d$, then the cop first moves to the intersection point between the robber's trajectory and $C_d$ (for which he spends at most the same distance as the robber did), and then follows along the robber's trajectory to catch him.
Thus we may assume that $d(x_0, r_n)$ has a limit. (Otherwise the cop catches the robber or gets closer to him even sooner. Indeed, consider the projection of both players' steps on the interval $[0,d(x_0, A)]$, where the robber's position is $d(x_0, r_n)$ and the cop's position is $d(x_0, c_n)$. If the robber moves at least for distance $d=d(x_0, A)$ steps towards $0$, then the cop caught the robber in $[0,d(x_0, A)]$, and hence in $X$.) Let $\ell_R = \lim_{n \to \infty} d(x_0, r_n)$. The cop's strategy is to first move to $x_0$ (for example along one of the rays $R$), and then use the ``radial'' strategy: ending each of his moves on the same ray $R$ as the robber is on, while trying to get as close to the robber as possible. Since $\delta(d_1, d_2, d) > 0$ for all appropriate $d_1$, $d_2$, $d$, we easily conclude that for every distinct $y_1\in C_{d_1}$, $y_2\in C_{d_2}$, we have $d(y_1, y_2) > d(\varphi_{d_1,d}(y_1), \varphi_{d_2,d}(y_2))$. Thus $d(x_0,c_n)$ is also increasing (after the cop reaches $x_0$) and hence convergent. Let $\ell_C = \lim_{n \to \infty} d(x_0, c_n)$. If $\ell_R = \ell_C$, then the cop can get arbitrarily close to the robber, hence he wins the game.

Suppose that $\ell_R > \ell_C$. Let $\delta_0 = \delta(\frac{\ell_R + \ell_C}{2}, \frac{\ell_R + \ell_C}{2}, \ell_C) >0$. Note that $\delta_0 \leq 1$ by definition. Take $\eps = \frac{\ell_R - \ell_C}{2}$. There exists $N_0 \in \mathbb{N}$ such that for all $n \geq N_0$ we have $d(x_0, c_n) \geq \ell_C - \eps$ and $d(x_0, r_n) \geq \ell_R - \eps$. For $n \geq N_0$, denote $r = r_{n}$, $c = c_{n}$, $r' = r_{n+1}$, $R'$ is the ray on which $r'$ lies, and $\overline{c}$ is the point in $R' \cap C_{d(x_0, c)}$. The cop's strategy is to first move to $\overline{c}$, and then move along $R'$ for the remainder of his agility. 

Observe that, $d(r, r') \geq d(\varphi_{\ell_R - \eps}(r), \varphi_{\ell_R - \eps}(r'))$, and $d(c, \overline{c}) \leq  d(\varphi_{\ell_C}(c), \allowbreak \varphi_{\ell_C}(\overline{c}))$ (here we are using the simplified notation for the function $\varphi$). 
Therefore
$$1 - \frac{d(c,\overline{c})}{d(r,r')} \geq 1 - \frac{d(\overbrace{\varphi_{\ell_C}(c)}^{\varphi_{\ell_C}(y_1)}, \overbrace{\varphi_{\ell_C}(\overline{c}))}^{\varphi_{\ell_C}(y_2)}}{d(\underbrace{\varphi_{\ell_R - \eps}(r)}_{y_1}, \underbrace{\varphi_{\ell_R - \eps}(r'))}_{y_2}} \geq \delta_0.$$
 Thus $d(c, \overline{c}) \leq (1 - \delta_0) d(r,r')$. This means that the cop can move closer to the robber along $g(r')$ for at least $d(r,r') - (1 - \delta_0) d(r,r') = \delta_0 d(r,r')$. Note that this holds for every step $n \geq N_0$. Since $\sum_n \tau(n) = \infty$, there exists $M_0 \in \mathbb{N}$ such that $\sum_{n = N_0}^{M_0} \tau(n) \geq \frac{\ell_R - \ell_C}{\delta_0}$. 

Thus in steps $i = N_0, \ldots, M_0$, the cop is able to move closer to the robber for at least $\sum_{n = N_0}^{M_0}  \delta_0 \tau(n) \geq \allowbreak \delta_0 \cdot \frac{\ell_R - \ell_C}{\delta_0} = \ell_R - \ell_C$. But this means that $d(x_0, c_{M_0}) \geq d(x_0, c_{N_0}) + \ell_R - \ell_C \geq \ell_C - \eps + \ell_R - \ell_C = \ell_R - \eps > \ell_C$, which is a contradiction.
\end{proof}

To illustrate the use of Proposition~\ref{prop:radial1}, we give another proof that $c(B^n) = 1$. 

\begin{lemma}
\label{lem:radial_ball}
The $n$-dimensional ball $B^n$ satisfies the condition that $\delta(d_1, d_2, d) > 0$ for every $d_1 \geq d_2 \geq d, d_1 > d$.
\end{lemma}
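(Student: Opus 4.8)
The plan is to make the radial reparametrization map completely explicit. On $B^n$, with $x_0 = O$ the center, $A = \partial B^n$, and the rays the straight segments from $O$ to the boundary, the point of $C_d$ lying on the ray through a given $y$ with $\|y\| = d'$ is simply $\frac{d}{d'}y$; hence $\varphi_{d_1,d}(y_1) = \frac{d}{d_1}y_1$ and $\varphi_{d_2,d}(y_2) = \frac{d}{d_2}y_2$. So the lemma reduces to a purely Euclidean inequality: it suffices to exhibit a constant $c = c(d_1,d_2,d) < 1$ with $\|\frac{d}{d_1}y_1 - \frac{d}{d_2}y_2\| \le c\,\|y_1 - y_2\|$ for all $y_1 \in C_{d_1}$, $y_2 \in C_{d_2}$ with $y_1 \neq y_2$, because then $\delta(d_1,d_2,d) \ge 1 - c > 0$.

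Second, I would note that both sides of this inequality depend on $y_1, y_2$ only through $d_1 = \|y_1\|$, $d_2 = \|y_2\|$ and the angle $\theta \in [0,\pi]$ between $y_1$ and $y_2$ (equivalently, through $\langle y_1, y_2\rangle = d_1 d_2 \cos\theta$), so the infimum defining $\delta$ is really over the single parameter $\theta$. Expanding gives $\|y_1 - y_2\|^2 = d_1^2 + d_2^2 - 2 d_1 d_2 \cos\theta$, while the scaling makes every term equal to $d^2$, so $\|\frac{d}{d_1}y_1 - \frac{d}{d_2}y_2\|^2 = 2 d^2 (1 - \cos\theta)$. Consequently
$$\delta(d_1,d_2,d) = 1 - \sup_{\theta} \sqrt{f(\theta)}, \qquad f(\theta) = \frac{2 d^2 (1 - \cos\theta)}{d_1^2 + d_2^2 - 2 d_1 d_2 \cos\theta},$$
where $\theta$ ranges over $[0,\pi]$ (over $(0,\pi]$ when $d_1 = d_2$, since only then does $\theta = 0$ force $y_1 = y_2$), and it is enough to prove $\sup_\theta f(\theta) < 1$.

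Third, I would establish that $f$ is non-decreasing on $[0,\pi]$. The clean route is to write the denominator as $(d_1 - d_2)^2 + 2 d_1 d_2 (1 - \cos\theta)$, giving $\frac{1}{f(\theta)} = \frac{(d_1 - d_2)^2}{2 d^2 (1 - \cos\theta)} + \frac{d_1 d_2}{d^2}$; since $1 - \cos\theta$ increases on $[0,\pi]$, this is non-increasing, hence $f$ is non-decreasing and is maximized at $\theta = \pi$. Evaluating there yields $f(\pi) = \frac{4 d^2}{(d_1 + d_2)^2}$, hence $\delta(d_1,d_2,d) = 1 - \frac{2d}{d_1 + d_2}$. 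Finally, $d_1 > d$ together with $d_2 \ge d$ gives $d_1 + d_2 > 2d$, so $\delta(d_1,d_2,d) > 0$, as claimed.

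I do not expect a genuine obstacle here; the monotonicity of $f$ is the only substantive point, and the rewriting above settles it without calculus. The only care needed is with the degenerate situations — $d_1 = d_2$ (where $f$ is in fact the constant $d^2/d_1^2$ and $\theta = 0$ must be excluded) and $n = 1$ (where only $\theta \in \{0,\pi\}$ occur) — and in all of them the closed form $\delta(d_1,d_2,d) = 1 - \frac{2d}{d_1 + d_2}$ remains valid and positive.
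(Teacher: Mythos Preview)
Your proof is correct, and in fact sharper than the paper's: you obtain the exact value $\delta(d_1,d_2,d)=1-\dfrac{2d}{d_1+d_2}$, whereas the paper only establishes positivity via lower bounds. Your route is also genuinely different. The paper treats the two cases $d_1=d_2$ and $d_1>d_2$ separately, using similar triangles in the first and an obtuse-angle argument (the isosceles triangle $\triangle x_0\,y_2\,\varphi_{d_1,d_2}(y_1)$ forces $\angle y_1\,\varphi_{d_1,d_2}(y_1)\,y_2>\pi/2$) together with the crude bound $d(\varphi_{d_1,d_2}(y_1),y_2)\le\diam(B^n)=2$ in the second. You instead make the radial map explicit as $\varphi_{d',d}(y)=\frac{d}{d'}y$, reduce everything to the single angular parameter $\theta$ via the law of cosines, and handle all cases at once by showing $f$ is monotone through the clean rewriting $1/f(\theta)=\frac{(d_1-d_2)^2}{2d^2(1-\cos\theta)}+\frac{d_1d_2}{d^2}$. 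The payoff of your approach is a closed-form $\delta$ and no case analysis; the paper's geometric argument, by contrast, would generalize more readily to settings where the explicit linear form of $\varphi$ is unavailable.
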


\begin{proof}
Let $x_0$ be the center of the ball $B^n$. Taking rays as straight lines between $x_0$ and the points on the boundary of $B^n$ satisfies the starshaped condition.
First, suppose that $d_1 = d_2$. In this case, $d_2 > d$. Take distinct  $y_1, y_2 \in C_{d_1}$. Triangles $\triangle x_0 \varphi_{d_1, d}(y_1) \varphi_{d_1, d}(y_2)$ and $\triangle x_0 y_1 y_2$ are similar. Thus $$\frac{d}{d_1} = \frac{d(\varphi_{d_1, d}(y_1), \varphi_{d_1, d}(y_2))}{d(y_1, y_2)}.$$ So $$\delta(d_1, d_1, d) = \inf_{y_1, y_2 \in C_{d_1},  y_1 \neq y_2} \left( 1 -  \frac{d(\varphi_{d_1, d}(y_1), \varphi_{d_1, d}(y_2))}{d(y_1, y_2)} \right) = 1 - \frac{d}{d_1} > 0.$$

Second, suppose that $d_1 > d_2$ and take $y_1 \in C_{d_1}$, $y_2 \in C_{d_2}$. Since $\triangle x_0 y_2 \varphi_{d_1, d_2}(y_1)$ is an isosceles triangle, the angle $\angle y_1 \varphi_{d_1, d_2}(y_1) y_2 > \frac{\pi}{2}$. Thus $d(y_1, y_2)^2 > (d_1 - d_2)^2 + d(\varphi_{d_1, d_2}(y_1), y_2)^2$. We have \begin{align*}
    \frac{d(\varphi_{d_1, d}(y_1), \varphi_{d_2, d}(y_2))^2}{d(y_1, y_2)^2} & < \frac{d(\varphi_{d_1, d_2}(y_1), y_2)^2}{(d_1 - d_2)^2 + d(\varphi_{d_1, d_2}(y_1), y_2)^2} =\\
    & = 1 - \frac{(d_1 - d_2)^2}{(d_1 - d_2)^2 + d(\varphi_{d_1, d_2}(y_1), y_2)^2} \leq \\
    & \leq 1 - \frac{(d_1 - d_2)^2}{(d_1 - d_2)^2 + 4},
\end{align*}
where we used the bound $d(\varphi_{d_1, d_2}(y_1), y_2)  \leq \diam(B^n) = 2$. Thus $$\delta(d_1, d_2, d) \geq \inf_{y_1 \in C_{d_1}, y_2 \in C_{d_2}} 1 - \sqrt{1 - \frac{(d_1 - d_2)^2}{(d_1 - d_2)^2 + 4}} > 0.$$
\end{proof}

We remark that for $B^n$ the described condition can be simplified to only requiring that for every $d' > d$, $$\delta(d', d) = \inf_{y_1,y_2 \in C_{d'}, y_1 \neq y_2} \left( 1 -  \frac{d(\varphi_{d', d}(y_1), \varphi_{d', d}(y_2))}{d(y_1, y_2)} \right) > 0.$$ This can be rephrased as follows. For $y_1, y_2 \in C_d$, let $\alpha_d(y_1, y_2) := d(y_1, y_2)$. Our condition then requires that $\alpha_d(y_1, y_2)$ is strictly increasing in terms of $d$ for every selection of $y_1, y_2$. 

We also prove that the radial strategy can be used on a hemisphere of an $n$-dimensional sphere. We start by stating the following technical lemma.

\begin{lemma}
\label{lem:technical}
Let $0 < d \leq d_2 \leq d_1 \leq \frac{\pi}{2}$, $d < d_1$. If $0 < \alpha \leq \frac{\pi}{2}$, then $$\cos d_1 \cos d_2 + \sin d_1 \sin d_2 \cos \alpha < \cos^2 d + \sin^2 d \cos \alpha.$$
\end{lemma}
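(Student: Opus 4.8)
Set $c=\cos\alpha$; the hypothesis $0<\alpha\le\pi/2$ gives $0\le c<1$, so that both $1+c$ and $1-c$ are strictly positive — this positivity is exactly what makes the argument go through. The plan is to avoid calculus altogether and instead rewrite both sides with the product-to-sum identities $\cos u\cos v=\tfrac12(\cos(u-v)+\cos(u+v))$ and $\sin u\sin v=\tfrac12(\cos(u-v)-\cos(u+v))$ (and $\cos^2 d=\tfrac12(1+\cos 2d)$, $\sin^2 d=\tfrac12(1-\cos 2d)$). This turns the left-hand side into $\tfrac{1+c}{2}\cos(d_1-d_2)+\tfrac{1-c}{2}\cos(d_1+d_2)$ and the right-hand side into $\tfrac{1+c}{2}+\tfrac{1-c}{2}\cos(2d)$, so the claimed inequality becomes
\[
\tfrac{1+c}{2}\bigl(\cos(d_1-d_2)-1\bigr)+\tfrac{1-c}{2}\bigl(\cos(d_1+d_2)-\cos(2d)\bigr)<0 .
\]

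The first step is to note that the first bracket is $\le 0$ always, with equality exactly when $d_1=d_2$ (here $0\le d_1-d_2<\pi/2$, so $\cos(d_1-d_2)=1$ only at $0$). The second step is to use $d_1,d_2\ge d$ to get $d_1+d_2\ge 2d$, observe that both $d_1+d_2$ and $2d$ lie in $[0,\pi]$ (since $d_1,d_2,d\le\pi/2$), and invoke the strict monotonicity of $\cos$ on $[0,\pi]$; hence the second bracket is $\le 0$, with equality exactly when $d_1+d_2=2d$, i.e.\ $d_1=d_2=d$. Since $1\pm c>0$, the whole left-hand side is a nonnegative combination of two nonpositive quantities, so it is $\le 0$, and it vanishes only if $d_1=d_2$ and $d_1=d_2=d$ hold simultaneously, i.e.\ only if $d_1=d_2=d$. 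The hypothesis $d<d_1$ rules this out, so the inequality is strict.

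There is no genuine obstacle; the only care needed is the bookkeeping of the equality cases — checking that the single hypothesis $d<d_1$ suffices to kill both equalities at once — and making sure the ranges used for the monotonicity of $\cos$ are respected, in particular $d_1+d_2\le\pi$, which holds because $d_1,d_2\le\pi/2$. For context (not needed in the proof): by the spherical law of cosines the quantity $\cos d_1\cos d_2+\sin d_1\sin d_2\cos\alpha$ is the cosine of the spherical distance between two points lying at spherical distances $d_1$ and $d_2$ from a common point, with angle $\alpha$ between the connecting geodesics, so the lemma says that pushing both points radially outward strictly increases their distance — precisely the input needed to verify the condition $\delta(d_1,d_2,d)>0$ of Proposition~\ref{prop:radial1} on a hemisphere. (Alternatively one could show that $F(a,b)=\cos a\cos b+c\sin a\sin b$ is strictly decreasing on the diagonal $a=b$ over $[0,\pi/2]$ and satisfies $\partial F/\partial a<0$ whenever $a\ge b>0$, and then pass from $(d,d)$ to $(d_2,d_2)$ to $(d_1,d_2)$; but the product-to-sum computation above is shorter and cleaner.)
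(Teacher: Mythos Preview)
Your proof is correct and takes a genuinely different route from the paper's. The paper treats both sides as functions of $\alpha$, checks the values at $\alpha=0$ (where $\varphi(0)=1\ge\psi(0)=\cos(d_1-d_2)$), and then compares derivatives: since $\sin d_1\sin d_2>\sin^2 d$ (using $d_1>d$, $d_2\ge d>0$), one has $\psi'(\alpha)<\varphi'(\alpha)<0$ on $(0,\pi/2]$, forcing $\psi(\alpha)<\varphi(\alpha)$ there. Your argument instead applies product-to-sum identities to recast the difference as a positive combination of $\cos(d_1-d_2)-1\le 0$ and $\cos(d_1+d_2)-\cos(2d)\le 0$, and then reads off strictness from the equality cases. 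This avoids calculus entirely and makes the structure of the inequality transparent: the ``difference'' part $d_1-d_2$ controls one term and the ``sum'' part $d_1+d_2$ versus $2d$ controls the other, with the weights $\tfrac{1\pm c}{2}$ both strictly positive precisely because $\alpha\in(0,\pi/2]$. Both approaches are short; yours is slightly more self-contained and arguably gives a cleaner picture of why $d<d_1$ alone suffices for strictness.
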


\begin{proof}
Let $\varphi, \psi \colon [0, \frac{\pi}{2}] \to \mathbb{R}$, $\psi(\alpha) = \cos d_1 \cos d_2 + \sin d_1 \sin d_2 \cos \alpha$ and $\varphi(\alpha) = \cos^2 d + \sin^2 d \cos \alpha$. Notice that $\varphi(0) = 1$ and $$\psi(0) = \begin{cases} 1, & d_1 = d_2; \\
\cos d_1 \cos d_2 + \sin d_1 \sin d_2, & d_1 \neq d_2. \end{cases}$$
It is easy to see that $\cos d_1 \cos d_2 + \sin d_1 \sin d_2 < 1$ for every $0 < d_2 < d_1 \leq \frac{\pi}{2}$. On the other hand, $\varphi'(\alpha) = - \sin^2 d \sin \alpha$ and $\psi'(\alpha) = - \sin d_1 \sin d_2 \sin \alpha$. Since $d_1 > d$, $0 > \varphi'(\alpha) > \psi'(\alpha)$. Thus for every $\alpha \in (0, \frac{\pi}{2}]$, $1 > \varphi(\alpha) > \psi(\alpha)$.
\end{proof}
 
\begin{lemma}
\label{lem:radial_sphere}
The $n$-dimensional hemisphere $S^n_+$ satisfies the condition that $\delta(d_1, d_2, d) > 0$ for every $d_1 \geq d_2 \geq d, d_1 > d$.
\end{lemma}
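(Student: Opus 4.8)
\textbf{Proof proposal for Lemma~\ref{lem:radial_sphere}.}
The plan is to take $x_0 = N = (0,\dots,0,1)$ as the center, with rays being the great-circle arcs from $N$ to the points of the equator $\partial S^n_+$. Since every equator point is at distance $\tfrac{\pi}{2}$ from $N$ and two distinct such arcs meet only at $N$, this exhibits $S^n_+$ as starshaped at $N$ with $A=\partial S^n_+$. For $d\in(0,\tfrac{\pi}{2}]$ the level set $C_d$ is the sphere $S^n\cap\{z=\cos d\}$ of Euclidean radius $\sin d$, so a point of $C_{d'}$ can be written as $y=(\sin d'\,\un u,\cos d')$ with $\un u$ a unit vector in $\RR^n$, and then $\varphi_{d',d}(y)=(\sin d\,\un u,\cos d)$. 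The key observation is that for $y_1=(\sin d_1\,\un u_1,\cos d_1)\in C_{d_1}$ and $y_2=(\sin d_2\,\un u_2,\cos d_2)\in C_{d_2}$, both $d(y_1,y_2)$ and $d(\varphi_{d_1,d}(y_1),\varphi_{d_2,d}(y_2))$ depend on $y_1,y_2$ only through the angle $\alpha\in[0,\pi]$ with $\cos\alpha=\langle\un u_1,\un u_2\rangle$; hence the ratio in the definition of $\delta$ is a function of $\alpha$ alone (for fixed $d_1,d_2,d$), and computing $\delta(d_1,d_2,d)$ reduces to analyzing a one-variable function.

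Next I would record the distance formulas. Since the shorter great-circle arc between two points of the closed hemisphere $\{z\ge0\}$ has length at most $\pi$ and therefore stays in $\{z\ge0\}$, the intrinsic metric of $S^n_+$ is the restriction of the spherical metric, so $d(p,q)=\arccos\langle p,q\rangle$ for all $p,q\in S^n_+$. This gives
\[
 d(y_1,y_2)=\arccos\bigl(\cos d_1\cos d_2+\sin d_1\sin d_2\cos\alpha\bigr),
\]
\[
 d(\varphi_{d_1,d}(y_1),\varphi_{d_2,d}(y_2))=\arccos\bigl(\cos^2 d+\sin^2 d\cos\alpha\bigr).
\]
Writing $G(\alpha)=\cos^2 d+\sin^2 d\cos\alpha$ and $H(\alpha)=\cos d_1\cos d_2+\sin d_1\sin d_2\cos\alpha$, the task becomes to show that $\sup_\alpha \arccos G(\alpha)/\arccos H(\alpha)<1$, where $\alpha$ ranges over $(0,\pi]$ (together with $\alpha=0$ when $d_1>d_2$, where the ratio equals $0$). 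The pointwise inequality $\arccos G(\alpha)<\arccos H(\alpha)$, i.e. $G(\alpha)>H(\alpha)$, follows from Lemma~\ref{lem:technical} for $0<\alpha\le\tfrac{\pi}{2}$; for $\tfrac{\pi}{2}<\alpha\le\pi$ one has $\cos\alpha<0$, while $\cos d_1\cos d_2<\cos^2 d$ (since $d<d_1$ and $d\le d_2\le\tfrac{\pi}{2}$) and $\sin^2 d\le\sin d_1\sin d_2$, so $G(\alpha)-H(\alpha)=(\cos^2 d-\cos d_1\cos d_2)+(\sin^2 d-\sin d_1\sin d_2)\cos\alpha>0$ as well.

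The remaining, and main, point is uniformity. I would argue that $\alpha\mapsto \arccos G(\alpha)/\arccos H(\alpha)$ extends to a continuous function on the compact interval $[0,\pi]$: away from $\alpha=0$ this is clear because $\arccos H(\alpha)=d(y_1,y_2)>0$ there; and as $\alpha\to0^+$, if $d_1>d_2$ then $\arccos H(\alpha)\to d_1-d_2>0$ while $\arccos G(\alpha)\to0$, so the ratio tends to $0$, whereas if $d_1=d_2$ then, using $\arccos(1-s)\sim\sqrt{2s}$, both $\arccos G(\alpha)$ and $\arccos H(\alpha)$ vanish like $\sin d\,\sqrt{2(1-\cos\alpha)}$ and $\sin d_1\,\sqrt{2(1-\cos\alpha)}$ respectively, so the ratio tends to $\sin d/\sin d_1<1$ (as $0<d<d_1\le\tfrac{\pi}{2}$). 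Hence the ratio is a continuous function on $[0,\pi]$ that is strictly less than $1$ everywhere, so it attains a maximum $<1$, and $\delta(d_1,d_2,d)=1-(\text{this maximum})>0$. The same reasoning covers $n=1$, where $\alpha$ takes only the values $0$ and $\pi$. The hard part is precisely this last compactness step: the pointwise inequality is essentially immediate from Lemma~\ref{lem:technical}, but one must rule out the ratio approaching $1$, and the delicate case is $\alpha\to0$ with $d_1=d_2$, handled via the small-angle expansion of $\arccos$.
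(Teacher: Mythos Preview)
Your proof is correct and follows essentially the same route as the paper: exhibit $S^n_+$ as starshaped at $N$, reduce via the spherical law of cosines to a one-variable ratio in the angle $\alpha$, invoke Lemma~\ref{lem:technical} for the pointwise strict inequality, and close with a continuity/compactness argument after checking the $\alpha\to 0$ limit (including the delicate $d_1=d_2$ case via $\arccos(1-s)\sim\sqrt{2s}$).

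The one substantive difference is the range of $\alpha$. The paper first subdivides the agility so that every move subtends an angle $\alpha\le\tfrac{\pi}{2}$ at $N$, and then only proves the inequality on $(0,\tfrac{\pi}{2}]$; this is a game-theoretic detour that, strictly speaking, establishes what is needed for Proposition~\ref{prop:radial1} rather than $\delta(d_1,d_2,d)>0$ over the full infimum. You instead handle $\alpha\in(\tfrac{\pi}{2},\pi]$ directly with the short observation that $\cos^2 d-\cos d_1\cos d_2>0$ and $(\sin^2 d-\sin d_1\sin d_2)\cos\alpha\ge 0$ there. Your version is therefore slightly cleaner: it proves the purely geometric lemma as stated, without appealing to the game, at the cost of one extra elementary inequality.
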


\begin{proof}
The shortest paths between the north pole $N$ and the points on the boundary of $S_+^n$ have all the conditions needed for rays, thus $S_+^n$ is starshaped at $N$.
Let $\tau'$ be the agility function chosen by the robber. Let $\tau$ be a subdivision of $\tau'$ such that the angle at $N$ corresponding to each move is smaller than $\frac{\pi}{2}$. By~\cite[Lemma 5]{Mo21} this only goes in the favour of the robber.

Let $y_1 \in C_{d_1}$, $y_2 \in C_{d_2}$, $y_1 \neq y_2$. If $y_2$ lies on the geodesic between $N$ and $y_1$, then $d(\varphi_{d_1, d}(y_1), \varphi_{d_2, d}(y_2)) = 0$ and $1 -  \frac{d(\varphi_{d_1, d}(y_1), \varphi_{d_2, d}(y_2))}{d(y_1, y_2)} = 1$, so this case can be excluded in the below calculations. Using the spherical law of cosines, we get
\begin{align*}
    \cos \left( d(\varphi_{d_1, d}(y_1), \varphi_{d_2, d}(y_2)) \right) & = \cos^2 d + \sin^2 d \cos \alpha,\\
    \cos \left( d(y_1, y_2) \right) & = \cos d_1 \cos d_2 + \sin d_1 \sin d_2 \cos \alpha,
\end{align*}
where $\alpha$ is the angle between the geodesics from $N$ to $y_1$ and from $N$ to $y_2$. Our choice of the agility function yields that $0 < \alpha \leq \frac{\pi}{2}$. Notice that this allows us to write
$$\delta(d_1, d_2, d) = \inf_{0 < \alpha \leq \pi/2} \left( 1 - \frac{\arccos(\cos^2 d + \sin^2 d \cos \alpha)}{\arccos(\cos d_1 \cos d_2 + \sin d_1 \sin d_2 \cos \alpha)} \right).$$
To determine $\delta(d_1, d_2, d)$, let us consider the continuous function $f \colon (0, \frac{\pi}{2}] \to \mathbb{R}$, $$f(\alpha) = 1 - \frac{\arccos(\cos^2 d + \sin^2 d \cos \alpha)}{\arccos(\cos d_1 \cos d_2 + \sin d_1 \sin d_2 \cos \alpha)}.$$ By Lemma~\ref{lem:technical}, $f(\alpha) > 0$ for every $\alpha \in (0, \frac{\pi}{2}]$.

Since $d_1 > d$, $f(\frac{\pi}{2}) = 1 - \frac{\arccos(\cos^2 d)}{\arccos(\cos d_1 \cos d_2)} > 0$. If $d_1 \neq d_2$, then $$\lim_{\alpha \searrow 0} f(\alpha) = 1 - \frac{\arccos(1)}{\arccos(\cos d_1 \cos d_2 + \sin d_1 \sin d_2)} = 1.$$ If $d_1 = d_2$, then we can write $$f(\alpha) = 1 - \frac{\arccos(1 + \sin^2 d(\cos \alpha - 1))}{\arccos(1 + \sin^2 d_1(\cos \alpha - 1))},$$ from which we can see that  $\lim_{\alpha \searrow 0} f(\alpha)  > 0$.

As $f$ is continuous and its limits at the boundary are bounded away from zero, it follows that $\inf_{\alpha \in (0, \pi/2]} f(\alpha) > 0$ and thus $\delta(d_1, d_2, d) > 0$.
\end{proof}

\section{Higher-dimensional spheres}

It turns out that a constant number of cops are also enough to win the game on $n$-dimensional spheres, while a linear number of cops is needed to catch the robber.

When studying the game on $S^n$ it is useful to consider a mirroring strategy of the cop. Let

$\rho \colon S^n \to S^n_-$, $\rho((x_1, \ldots, x_n, z)) = \begin{cases} (x_1, \ldots, x_n, z), & z \leq 0,\\ (x_1, \ldots, x_n, -z), & z \geq 0. \end{cases}$

\begin{lemma}
\label{lem:guard-equator}
One cop can guard a great circle in $S^n$.
\end{lemma}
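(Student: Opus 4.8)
The plan is to exhibit an explicit shadow function onto a great circle and verify it is $1$-Lipschitz, so that guarding follows from the discussion preceding Lemma~\ref{lem:subset}. Fix a great circle $\Gamma \subseteq S^n$; without loss of generality take $\Gamma = S^n \cap \{x_3 = \cdots = x_n = z = 0\}$, i.e.\ the unit circle in the $(x_1,x_2)$-plane. For a point $p = (x_1, x_2, x_3, \ldots, x_n, z) \in S^n$, let $\un{u} = (x_1, x_2)$ be its projection to the plane spanned by $\Gamma$. If $\un{u} \neq \un{0}$, define $\sigma(p) = \un{u}/\Vert \un{u}\Vert$ (padded with zeros), the nearest point of $\Gamma$ to $p$; this is the natural ``radial'' projection within the $(x_1,x_2)$-plane, equivalently the point of $\Gamma$ minimizing spherical distance to $p$. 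If $\un{u} = \un{0}$, then $p$ lies on the orthogonal great sphere $S^{n-2}$ that is equidistant from all of $\Gamma$, and here $\sigma$ is defined by an arbitrary fixed choice (say, the point $(1,0,\ldots,0)$); I will need to argue this choice does not spoil the Lipschitz property.

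The key step is the inequality $d_{S^n}(\sigma(p), \sigma(q)) \le d_{S^n}(p,q)$ for all $p,q \in S^n$, where $d_{S^n}$ is the spherical (geodesic) metric. I would prove this by reducing to a two-dimensional sub-sphere: given $p$ and $q$, consider the great $2$-sphere $\Sigma$ through $p$, $q$ and containing $\Gamma$ (if $p,q$ together with the plane of $\Gamma$ span only a $3$-dimensional linear subspace, this is literally a $2$-sphere $S^2$; in general I can take the smallest great subsphere containing $\Gamma \cup \{p,q\}$, which has dimension at most $3$, and note that geodesic distances are preserved under inclusion of great subspheres). On such a subsphere the map $\sigma$ restricts to the corresponding radial projection onto the equator $\Gamma$, and distances between points of $S^n$ lying on this subsphere equal their distances measured within it. So it suffices to verify the inequality on $S^2$ (and on $S^3$), where it is a concrete computation: writing points in terms of the ``latitude'' (angular distance to $\Gamma$) and ``longitude'' (the angular coordinate along $\Gamma$, i.e.\ $\arg \un{u}$), one checks via the spherical law of cosines that decreasing both latitudes to $0$ while keeping longitudes fixed does not increase the distance — this is exactly the content of Lemma~\ref{lem:technical} in the case $d = 0$ (there $\varphi(\alpha) = 1$ corresponds to longitude difference $\alpha$, and $\psi(\alpha) = \cos d_1 \cos d_2 + \sin d_1 \sin d_2 \cos\alpha$ is the cosine of the original distance, so $\psi(\alpha) \le \varphi(\alpha)$ gives $d_{S^n}(p,q) \ge d_{S^n}(\sigma(p),\sigma(q))$). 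When one or both of $p,q$ lie on the orthogonal $S^{n-2}$, the distance $d(p,q)$ is at least $\pi/2 \ge \operatorname{diam}(\Gamma)/2$... actually $\operatorname{diam}(\Gamma) = \pi$, so here I would instead note that a point on the orthogonal sphere is at distance $\pi/2$ from every point of $\Gamma$, hence at distance $\ge \pi/2 - d(p, q')$... — more carefully, if $p$ is on the orthogonal sphere and $q$ is not, then $d(p,q) \ge \pi/2 - (\text{latitude of } q)$ while $d(\sigma(p),\sigma(q)) \le \pi$ is not immediately small enough, so I would treat this boundary case by a limiting/continuity argument: approach $p$ by points $p_k \notin S^{n-2}$, apply the generic inequality, and use that the only failure is the non-uniqueness of $\sigma(p)$, which does not affect guarding since the cop can sit anywhere on the (single) orbit.

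Once $\sigma$ is shown to be $1$-Lipschitz with $\sigma|_\Gamma = \mathrm{id}_\Gamma$, the conclusion is immediate: the cop moves so that after the robber's move he occupies $\sigma(r^n)$. This is legal because $d(\sigma(r^{n-1}), \sigma(r^n)) \le d(r^{n-1}, r^n) \le \tau(n)$, and the cop can first reach $\sigma(r^0)$ by the standard argument (if he starts off $\Gamma$, he has a one-move head start to close the gap, or one simply incorporates the approach phase as in Proposition~\ref{prop:disk}); thereafter he stays in the shadow, so if the robber ever enters $\Gamma$ the cop is on top of him. Hence one cop guards $\Gamma$. The main obstacle I anticipate is handling the orthogonal great sphere $S^{n-2}$ where $\sigma$ is genuinely multivalued as an honest map — the clean fix is to observe that guarding only requires the cop to maintain a $1$-Lipschitz shadow along the robber's continuous trajectory, and along any such trajectory the longitude coordinate $\arg\un{u}(r(t))$ can be chosen to vary continuously except possibly where $\un{u} = \un 0$, at which instants the robber is at distance exactly $\pi/2$ from $\Gamma$ and the cop has slack to re-sync; alternatively, and more simply, restrict attention to $\varepsilon$-approaching games via Theorem~\ref{thm:eps-game} and perturb $\Gamma$ slightly, though the direct shadow argument is cleaner and is what I would ultimately write.
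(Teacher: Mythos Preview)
Your approach has a genuine gap: the nearest-point projection $\sigma:S^n\to\Gamma$ onto a great circle $\Gamma\cong S^1$ is \emph{not} $1$-Lipschitz, and in fact no continuous retraction $S^n\to S^1$ exists for $n\ge 2$ (such a retraction would force $\pi_1(S^1)=\ZZ$ to inject into $\pi_1(S^n)=0$). Concretely, in $S^2$ with $\Gamma$ the equator, take $p=(\eps,0,\sqrt{1-\eps^2})$ and $q=(-\eps,0,\sqrt{1-\eps^2})$ near the north pole; then $d(p,q)\approx 2\eps$ while $\sigma(p)=(1,0,0)$ and $\sigma(q)=(-1,0,0)$ are antipodal on $\Gamma$, so $d(\sigma(p),\sigma(q))=\pi$. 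Your appeal to Lemma~\ref{lem:technical} is misdirected: in that lemma the parameters $d_1,d_2,d$ are distances from the \emph{pole}, and the inequality says that moving points \emph{toward the pole} (decreasing to $d$) contracts distances. Projecting to the equator means increasing these distances to $\pi/2$, which \emph{expands} pairwise distances --- exactly the failure above. Your proposed fixes cannot rescue this: the game is played in discrete steps, so the robber can make a single short move across the orthogonal $S^{n-2}$ and force the longitude to jump by nearly $\pi$; no trajectory-tracking or limiting argument prevents this, and perturbing $\Gamma$ changes nothing since every great circle has the same problem.

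The paper's proof takes a completely different and much shorter route. It does not retract onto $\Gamma$ at all; instead it uses the folding map $\rho:S^n\to S^n_-$ that is the identity on the lower hemisphere and reflection $z\mapsto -z$ on the upper hemisphere. This $\rho$ is $1$-Lipschitz and fixes the entire equatorial $S^{n-1}$ (hence in particular any great circle lying in that hyperplane). After an orthogonal change of coordinates arranging $c^1=\rho(r^1)$, the cop simply maintains $c^n=\rho(r^n)$ thereafter. The key point you missed is that the shadow map in the paper's guarding framework is only required to satisfy $\sigma|_A=\mathrm{id}$, not to have image contained in $A$; the paper exploits this by letting the cop's shadow roam over an entire hemisphere rather than being pinned to the one-dimensional circle.
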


Remark: the cop essentially guards a whole hemisphere.

\begin{proof}
Given starting positions $(r_1, c_1)$ of the robber and the cop, respectively, we change coordinates such that $c_1 = \rho(r_1)$ (this can be done with an orthogonal transformation, so distances are preserved). Now the cop's strategy is to move such that $c_n = \rho(r_n)$, thus guarding the great circle $S^n \cap \{ (x_1, \ldots, x_n, 0) \in S^n \}$.
\end{proof}

\begin{theorem}
\label{thm:sphere}
If $n\geq 1$, then $c(S^n) = 2$.
\end{theorem}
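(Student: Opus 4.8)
The plan is to prove the two inequalities separately. For the lower bound $c(S^n)\ge 2$, I would show that one cop cannot win. The idea is symmetry: the robber starts antipodally to the cop and plays a mirroring strategy of his own. Concretely, place the cop at the north pole $N$ and the robber at the south pole $-N$, and let the robber always move to keep his position equal to $-c_n$ (the antipode of the cop's position), which is a legal move since the antipodal map is an isometry. Then $d(r_n,c_n) = d(-c_n,c_n) = \pi$ for all $n$, so the value of the game is $\pi > 0$ and the robber wins. (One should check this is consistent with the rules: the robber moves first, but he can look at where the cop \emph{was} and mirror relative to the cop's previous position, incurring distance $d(c_{n-1},c_n)\le\tau(n)$; after the cop moves, the distance may shrink from $\pi$, but then the robber's \emph{next} move restores it — actually more carefully, the robber should mirror so that after \emph{his} move he is antipodal to the cop's \emph{current} position, and this keeps $d(r_n,c_n)=\pi$ after every full round. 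The value is the infimum over all positions, so I must ensure the distance never drops to $0$; since it is always close to $\pi$ away from the cop, this is fine. This deserves a careful half-paragraph in the write-up.)

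For the upper bound $c(S^n)\le 2$, I would use the two lemmas just proved. By Lemma~\ref{lem:guard-equator}, the first cop can move to guard a great circle $E = S^n\cap\{z=0\}$, i.e., he maintains $c_n = \rho(r_n)$, the mirror image of the robber across $E$. This confines the robber: if the robber ever enters $E$, the first cop catches him (indeed once the robber is in the hemisphere being "pinched", he can be caught), so effectively the robber is trapped in one open hemisphere, say $S^n_+ \setminus E$. Now the second cop plays on $S^n_+$, which by Lemma~\ref{lem:radial_sphere} satisfies the hypothesis of Proposition~\ref{prop:radial1} with center the north pole $N$; hence the second cop, using the radial strategy on $S^n_+$, wins against a robber confined to $S^n_+$. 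Combining: the first cop guards $E$ forever (so the robber stays in $S^n_+$), and the second cop applies the radial strategy for the hemisphere to get arbitrarily close to the robber. Hence two cops win.

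There is one subtlety to address: the radial strategy of Proposition~\ref{prop:radial1} is designed for the case where the robber moves freely inside the starshaped space $S^n_+$, but here the robber might try to approach or touch the boundary great circle $E = C_{\pi/2}$. However, the first cop is guarding $E$, so any attempt by the robber to reach $E$ results in capture by cop one; thus from cop two's perspective the robber stays in the region $d(N,\cdot) < \pi/2$ (or is caught), which is exactly the regime in which the radial strategy operates. A second point is the interleaving of the two cops' strategies and the agility-subdivision argument used in Lemma~\ref{lem:radial_sphere} (refining $\tau$ so each move subtends an angle $<\pi/2$ at $N$); by \cite[Lemma 5]{Mo21} refining the agility only helps the cops, and both cops can operate under the refined agility simultaneously. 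The main obstacle is therefore not a deep computation but the bookkeeping: making precise that "the robber is confined to the open hemisphere, or else already caught" and that this confinement is exactly the hypothesis needed for the radial strategy — together with verifying that the first cop can reach the mirrored position before the confinement argument kicks in (he simply needs a finite number of steps to set up $c_n=\rho(r_n)$ via the orthogonal change of coordinates in Lemma~\ref{lem:guard-equator}, which is immediate). I expect the lower-bound mirroring argument and this confinement/hand-off argument to be where the care is needed; the rest follows from the machinery already established.
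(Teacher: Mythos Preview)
Your upper bound argument is essentially identical to the paper's: one cop guards a great circle via Lemma~\ref{lem:guard-equator} (by an orthogonal change of coordinates, no preliminary movement is needed---the equator is simply chosen so that $c_1=\rho(r_1)$ from the start), confining the robber to the upper hemisphere, and the second cop applies the radial strategy on $S^n_+$ via Proposition~\ref{prop:radial1} and Lemma~\ref{lem:radial_sphere}. The subtleties you flag (robber approaching the boundary, agility refinement) are handled just as you describe.

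Your proof is actually more complete than the paper's in one respect: the paper's proof of Theorem~\ref{thm:sphere} argues only the upper bound and is silent on $c(S^n)\ge 2$. Your antipodal-mirroring strategy for the robber fills this gap. One small correction to the bookkeeping: when the robber moves in step $n$ to $-c^{n-1}$, the distance he must cover is $d(r^{n-1},-c^{n-1})=d(-c^{n-2},-c^{n-1})=d(c^{n-2},c^{n-1})\le\tau(n-1)$, not $\tau(n)$; so you should have the robber choose a constant (or non-decreasing) agility, which he is free to do. With that, $d(r^n,c^n)\ge\pi-\tau$ for all $n$ and the value is positive. Note also that Lemma~\ref{lem:subset} cannot be used here to reduce to $S^1$, since for $n\ge2$ there is no retraction (let alone a $1$-Lipschitz one) from $S^n$ onto a great circle, so a direct robber strategy like yours really is required.
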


\begin{proof}
Let $(r_1, c_1, c_2)$ be the  starting positions of the robber and both cops. First, the coordinate system is fixed in such a way that $c_1 = \rho(r_1)$. Afterwards, the first cop uses the strategy from Lemma~\ref{lem:guard-equator} to guard the lower hemisphere. So the robber is essentially limited to moving around in the upper hemisphere (otherwise he is caught). The second cop first moves to the north pole $N$ and then uses the radial strategy. Proposition~\ref{prop:radial1} and Lemma~\ref{lem:radial_sphere} ensure that he can get arbitrarily close to the robber.
\end{proof}

It follows from~\cite{SaKu00} that exactly $n+1$ cops are needed to catch the robber on $S^n$, i.e.\ $$c_0(S^n) = n+1.$$

Observe that two cops can win on $S^n$ even if each of them is using radial strategy on a complementary hemisphere. This can easily be generalized to spaces that are union of a number of subsets, each satisfying the condition of Proposition~\ref{prop:radial1}.

\section{Covering space method and its application to the flat torus}

Sometimes it is beneficial for the cops to imagine that the game is played on the covering space instead of on the space itself. The following result shows that if $k$ cops can win the game on the covering space, then $k$ cops can also win on the space itself.

For a topological space $X$, the covering space of $X$ is a topological space $C$ together with a continuous surjective map $p \colon C \to X$
such that for every $x \in X$, there exists an open neighborhood $U$ of $x$, such that $p^{-1}(U)$ is a union of disjoint open sets in $C$, each of which is mapped homeomorphically onto $U$ by $p$ (i.e.\ it is a local homeomorphism). If $X$ and $C$ are both geodesic spaces and $p$ is not only a local homeomorphism but also a local isometry, then we say that the covering space $C$ of $X$ \emph{locally preserves distances}.

\begin{lemma}
\label{lem:covering}
If $C$ is the covering space of $X$ that locally preserves distances, then $c(X) \leq c(C)$.
\end{lemma}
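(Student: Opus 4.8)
The plan is to show that $k$ cops winning on the covering space $C$ can be simulated by $k$ cops on $X$, by lifting the robber's moves to $C$, running the winning strategy there, and projecting the cops' moves back down via $p$. The key point making this work is that $p$ is a local isometry, so short paths downstairs lift uniquely to short paths of the same length upstairs, and paths upstairs project down to paths of the same length. Concretely: fix $\eps > 0$; by Theorem~\ref{thm:eps-game} it suffices to win the $\eps$-approaching game with $c(C)$ cops, so I may assume $k = c_\eps(C)$ cops have a strategy on $C$ keeping distance at most $\ValC^{\,C} + \eps$ within finitely many steps, and I argue that these $k$ cops achieve distance at most that same value plus a controlled error on $X$.

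The steps, in order. First I set up the lift: pick a basepoint $\tilde r^{\,0} \in p^{-1}(r^0)$ for the robber's initial position, and basepoints $\tilde c_i^{\,0} \in p^{-1}(c_i^0)$ for the cops — choosing, for each cop, a lift that is closest to $\tilde r^{\,0}$ in $C$, which ensures $d_C(\tilde c_i^{\,0}, \tilde r^{\,0})$ equals $d_X(c_i^0, r^0)$ whenever $d_X(c_i^0,r^0)$ is below the covering's injectivity radius (and in general is no larger than it can be made). Second, inductively: given the robber's move from $r^{n-1}$ to $r^n$ along a geodesic of length $\le \tau(n)$ in $X$, lift that geodesic path starting from the current lifted position $\tilde r^{\,n-1}$ using the path-lifting property of covering maps; since $p$ is a local isometry, the lifted path has the same length, so $d_C(\tilde r^{\,n-1}, \tilde r^{\,n}) \le \tau(n)$, a legal robber move in $C$. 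Third, feed $\tilde r^{\,n}$ to the cops' winning strategy on $C$, obtaining legal moves $\tilde c_i^{\,n-1} \to \tilde c_i^{\,n}$ with $d_C(\tilde c_i^{\,n-1}, \tilde c_i^{\,n}) \le \tau(n)$; project via $p$ to get $c_i^n = p(\tilde c_i^{\,n})$, and note $d_X(c_i^{n-1}, c_i^n) \le \ell(p \circ (\text{lifted cop path})) = \ell(\text{lifted cop path}) \le \tau(n)$, so the projected cop moves are legal on $X$. Fourth, since $p$ is $1$-Lipschitz (being a local isometry between geodesic spaces, projection of a geodesic is a path of equal length, hence $d_X(p(a),p(b)) \le d_C(a,b)$), we get $d_X(r^n, c_i^n) = d_X(p(\tilde r^{\,n}), p(\tilde c_i^{\,n})) \le d_C(\tilde r^{\,n}, \tilde c_i^{\,n})$ for every $n$ and $i$. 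Taking the infimum over $n$ and minimum over $i$, the value of the simulated game on $X$ is at most the value of the game played on $C$, which the cops keep within $\eps$ of $\ValC^{\,C}$; letting $\eps \to 0$ through the combined-strategy argument of the $\eps$-approaching games gives $\ValC^{\,X} \le \ValC^{\,C}$, i.e.\ $c(X) \le c(C)$.

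The main obstacle I expect is handling the robber moving between different sheets of the covering — i.e.\ ensuring the cops' projected strategy does not get "stranded" on the wrong lift. The robber could choose a long move that, after lifting, lands near a different preimage of a cop's position than the one the cop is tracking; but because path-lifting is uniquely determined once the starting lift is fixed, the robber's entire trajectory lifts to a single well-defined path in $C$, and the cops simply follow the $C$-strategy against that lifted trajectory — they never need to "switch sheets," they just play in $C$. The only genuine subtlety is that the inequality $d_X(p(\tilde r^{\,n}), p(\tilde c_i^{\,n})) \le d_C(\tilde r^{\,n}, \tilde c_i^{\,n})$ can be strict, so the cops might actually do \emph{better} on $X$ than on $C$ — which is fine, it only helps. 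One should also check the agility constraint $\sum_n \tau(n) = \infty$ is unaffected (it is, since the same $\tau$ is used), and that capture upstairs ($\tilde c_i^{\,n} = \tilde r^{\,n}$) forces capture downstairs ($c_i^n = r^n$) — immediate since $p$ is a function — so the same bound holds for $c_0$ as well, though the statement only claims it for $c$.
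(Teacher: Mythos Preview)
Your proposal is correct and follows essentially the same approach as the paper: lift the robber's trajectory to $C$ via path-lifting, run the cops' winning strategy there, project the cop moves back through $p$, and use that a local isometry is $1$-Lipschitz to conclude the value downstairs is at most the value upstairs. The paper's own proof is a brief three-sentence sketch of exactly this idea, whereas you have carefully spelled out the path-lifting step, the legality of projected moves, and the $\eps$-approaching framework; none of this extra care is wrong, and your remarks on sheet-switching and the $c_0$ case are accurate bonuses.
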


\begin{proof}
Let $p \colon C \to X$ be the covering map and let $c(C) = k$. While the game in $X$ is played with $k$ cops, the cops imagine the game is simultaneously played on $C$. If a cop moves to $c$ in $C$, the same cop moves to $p(c)$ in $X$ (which is possible due to the properties of the covering map). Thus since at least one cop can get arbitrarily close to the robber in $C$, the image of this cop in $X$ also gets arbitrarily close to the robber.
\end{proof}

In the following we consider the cops and robber game played on a flat torus, to demonstrate the usefulness of the covering space method. Recall that the covering space of $T^n$ is $\mathbb{R}^n$.

\begin{lemma}
\label{lem:Rn2cops}
Let $\eps >0$. If the robber's position is on the line between two cops in $\mathbb{R}^n$, then the two cops can come to distance $\eps$ from the robber in $\mathbb{R}^n$. 
\end{lemma}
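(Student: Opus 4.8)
The plan is to exploit the fact that in $\mathbb{R}^n$, if the robber sits on the segment between the two cops, then the cops are on a line (the line through both cops), and the robber's escape options along the perpendicular directions are exactly the mechanism that let a single cop approach him in the disk (Proposition~\ref{prop:disk}). First I would set up the geometry: let $c^0_1, c^0_2$ be the cops and let the robber $r^0$ lie on the segment $[c^0_1,c^0_2]$, so the three points are collinear and the robber is ``sandwiched.'' The key invariant the cops will maintain is: after every step, the robber is still on the segment between the two cops. This is the $\mathbb{R}^n$ analogue of the cop staying ``on the ray between the center and the robber'' in the disk argument, but now the ``center'' is replaced by the robber himself being pinched from both sides.

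The main step is to show the cops can restore the invariant after each robber move while not losing ground (in fact gaining, or at worst breaking even). Suppose in a step of duration $\tau$ the robber moves from $r$ to $r'$ with $|rr'|\le\tau$. Decompose $r'-r = u + w$ where $u$ is the component along the old cop-line $\ell$ and $w$ is orthogonal to $\ell$. The cop ``behind'' in the direction of $u$ can move straight toward $r'$; the geometry (law of cosines, exactly as in Lemma~\ref{lem:ball-guard}) shows that because the robber's displacement has a component perpendicular to $\ell$, the distance $|c_i r'|$ is strictly less than $|rr'|$ unless $w=0$, so that cop reaches $r'$ — or gets within the distance it could travel — with room to spare, and in particular can end strictly closer than before. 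The other cop then repositions on the opposite side along the new line through $c_i'$ and $r'$, using the similar-triangles shrinking argument (again as in Lemma~\ref{lem:ball-guard}) to move a shorter distance than $\tau$ while restoring collinearity with the robber in the middle. Thus after each step both cop-to-robber distances are nonincreasing, at least one is strictly decreasing whenever the robber's move is not purely along $\ell$, and the invariant holds.

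It remains to handle the case where the robber moves only along $\ell$: then he is moving directly toward one cop and away from the other, so the near cop simply closes in and the far cop follows, and the near distance drops by exactly $\tau$ minus at most the far cop's slack — the robber cannot do this forever in $\mathbb{R}^n$ because... actually here I should be careful: $\mathbb{R}^n$ is unbounded, so unlike the disk there is no boundary argument. The right way to finish is a convergence-and-contradiction argument in the style of Proposition~\ref{prop:disk}: let $\ell_i = \liminf$ of the distance from cop $i$ to the robber; these limits exist as the distances are nonincreasing and bounded below by $0$. If either is $0$ we are done after finitely many steps (reaching distance $\eps$). If both are positive, then since $\sum\tau(n)=\infty$ and in each step the cops collectively gain an amount bounded below by a fixed multiple of $\tau(n)$ times a quantity depending on $\min(\ell_1,\ell_2)$ and the angle — the perpendicular escape component is forced to be bounded away from a full ``useful'' amount once the distances have nearly stabilized, exactly mirroring the ``one full circle costs the robber $2\pi\ell_R$ but the cop only $2\pi\ell_C$'' bookkeeping — we contradict the assumed positive limit. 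The main obstacle is making this last bookkeeping precise in $\mathbb{R}^n$: one must quantify how much perpendicular motion the robber is forced into (equivalently, lower-bound the per-step gain) in terms of the current distances, and this is where I would invoke the $\varepsilon$-approaching game (Theorem~\ref{thm:eps-game}) to reduce to finitely many steps and avoid delicate limiting issues, arguing directly that for any $\eps>0$ finitely many steps suffice to reach distance $\eps$.
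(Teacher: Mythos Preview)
Your proposal contains a concrete error and a structural gap, and the paper's proof proceeds quite differently.

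The concrete error is the claim that ``the distance $|c_i r'|$ is strictly less than $|rr'|$ unless $w=0$, so that cop reaches $r'$.'' This is false. Take the symmetric situation $c_1=(d,0)$, $c_2=(-d,0)$, $r=0$, and let the robber move purely perpendicularly to $r'=(0,\tau)$. Then $|c_1 r'|=\sqrt{d^2+\tau^2}>\tau=|rr'|$, so neither cop can reach $r'$. More generally, with $r'=(u,w)$ and $u^2+w^2=\tau^2$, one has $|c_1 r'|<\tau$ only when $d<2u$, i.e.\ only when the robber moves more than halfway toward that cop along the axis. The isosceles-triangle trick from Lemma~\ref{lem:ball-guard} applies to a robber crossing into a guarded disk, not to this configuration.

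The structural gap is the one you yourself flag: $\mathbb{R}^n$ is unbounded, so the disk argument does not transfer. Concretely, if the robber always moves perpendicular to the current cop-line, the cops can maintain collinearity by translating perpendicularly with him, and the distances $d(c_i,r)$ never change. Your ``nonincreasing distances plus $\sum\tau(n)=\infty$ gives a contradiction'' sketch needs a per-step gain bounded below by a fixed positive multiple of $\tau(n)$, and there is no such bound here: the gain from greedy pursuit after a perpendicular step of size $\tau$ is $d+\tau-\sqrt{d^2+\tau^2}$, which is of order $\tau^2/d$, not $\tau$, and the collinearity invariant is then lost anyway. Invoking the $\varepsilon$-approaching game does not repair this; it reformulates the goal but does not supply the missing lower bound.

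The paper's proof avoids both issues by abandoning the collinearity invariant entirely. It fixes $\eps$ at the outset, sets up coordinates so the initial cop-line is the $x_n$-axis, and precomputes a small angle $\alpha$ (depending on $\eps$ and the initial distance $a$) with the property that a right triangle with legs $a$ and $t=\tfrac{1}{2\eps}(a^2-\eps^2)$ has hypotenuse $t+\eps$. Each robber step is split into its first $n-1$ coordinates (size $\tau_n^x$) and its $x_n$-coordinate (size $\tau_n^z$). The cops match the $\tau_n^x$ part but tilted by the fixed angle $\alpha$ (resp.\ $\beta$) toward the axis, and match the $\tau_n^z$ part along the axis toward the robber. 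This keeps the perpendicular offset below $\eps$ throughout. The finish is a dichotomy on infinite sums: if $\sum\tau_n^x>t$ ever occurs, the angled pursuit has already closed the axial gap to at most $\eps$; otherwise $\sum\tau_n^z=\infty$ and the two cops squeeze the robber along the $x_n$-axis. The key idea you are missing is to commit to $\eps$ up front and use it to choose a fixed pursuit angle, rather than trying to drive the distance to zero by an invariant-and-limit argument.
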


\begin{proof}
Without loss of generality we may assume that $r^0 = (0,\ldots,0)$, $c_1^0 = (0,\ldots,0,a)$, $c_2^0 = (0,\ldots,0,-b)$, $a \geq b > 0$. Take $\eps > 0$. We want to prove that the cops have a strategy to reduce $d(r^n, \{c_1^n, c_2^n\})$ to at most $\eps$. Let $t = \frac{1}{2 \eps} (a^2 - \eps^2)$, $\alpha = \arctan(\frac{a}{t})$ and $\beta = \arctan(\frac{b}{t})$. Note that the right-angled triangle with one of the angles $\alpha$ and legs of lengths $a$ and $t$ has the hypotenuse of length $t + \eps$.

The cops partition each move of the robber into its first $n-1$ coordinates $x_1, \ldots, x_{n-1}$, and the last coordinate $x_n$, denoted by $\tau_n^x$ and $\tau_n^z$, respectively. Note that in each step, at least one of these is greater than $\frac{\tau_n}{2}$. The cops move for $\tau_n^x$ in the same direction as the robber, but slightly towards him, the first cop following a line with slope $\pm \alpha$, and the second cop following a line with slope $\pm \beta$ (as long as this step ensures that the $x_n$ coordinates of the first cop, resp. second cop, is larger, resp. smaller, than the $x_n$ coordinate of the robber, otherwise they move on a smaller angle, directly towards the robber). They also move for $\tau_n^z$ in $x_n$-coordinate towards the robber. The first part of the move at angle $\alpha$ ($\beta$, resp.) ensures that the distance between the cops' and the robber's projections on the $x_1, \ldots, x_{n-1}$-hyperplane are always at most $\eps$ apart, while the second part of the move ensures that the distance between the cops' projections on $x_n$-axis is getting smaller. Notice also that such moves ensure that after every move of the cops, the robber is at most at distance $\eps$ from the projections of cops' positions onto the $x_1, \ldots, x_{n-1}$-hyperplane. 
If $\sum_{n=1}^k \tau_n^x >t$ for some $k$, then the latest after $k$ steps the distance between the cops and the robber will be at most $\eps$. Otherwise, since $\sum_n \tau_n = \infty$, it holds $\sum_n (\tau_n - \tau_n^x) =\sum_n \tau_n^z= \infty$, thus the cops will eventually be on the same $x_n$-coordinate as the robber, and the robber will be at least $\eps$-close.
\end{proof}

\begin{remark}
The conclusion of Lemma~\ref{lem:Rn2cops} also holds if the robber is at distance less than $\eps$ from the line between the two cops. The details are left to the reader.
\end{remark}

Using the covering space method we demonstrate that two cops win the game on $T^n$.

\begin{theorem}
\label{thm:Tnflat}
For every $n \geq 1$, $c(T^n) = 2$.
\end{theorem}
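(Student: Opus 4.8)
The plan is to prove $c(T^n) = 2$ by establishing the matching lower and upper bounds. For the lower bound $c(T^n) \geq 2$, I would exhibit a robber strategy against a single cop. On $T^1 = S^1$ a single cop clearly cannot win (the robber maintains a fixed positive gap by matching each move in the direction away from the cop), and since $S^1$ embeds in $T^n$ as a $1$-Lipschitz retract (project onto one coordinate), Lemma~\ref{lem:subset} gives $c(T^n) \geq c(S^1) \geq 2$. Alternatively one can argue directly: with one cop, the robber picks a coordinate circle and runs around it, always keeping the cop's shadow on that circle at distance $\geq \tfrac12$ of the circumference, say.

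For the upper bound $c(T^n) \leq 2$, I would use the covering space method (Lemma~\ref{lem:covering}): the universal cover of $T^n$ is $\mathbb{R}^n$ with the covering map $p \colon \mathbb{R}^n \to T^n$ a local isometry, so it suffices to show that two cops can win the game on $\mathbb{R}^n$. (Strictly, $\mathbb{R}^n$ is not compact, but the game is played only up to distances bounded by the agility sums; alternatively one restricts attention to a large compact region, or simply notes that Lemma~\ref{lem:covering}'s proof only uses that the cops can lift their moves, which holds here.) The strategy on $\mathbb{R}^n$ is: the two cops first maneuver so that the robber lies on the line segment between them, and then invoke Lemma~\ref{lem:Rn2cops} (together with the following Remark) to conclude that they can come within any $\eps > 0$ of the robber, hence $\ValC = 0$ on $\mathbb{R}^n$, hence two cops win on $\mathbb{R}^n$, hence on $T^n$.

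The step that requires real care is \emph{getting the robber onto the line between the two cops}. One clean way: lift the robber's initial position $r^0 \in T^n$ to a point $\tilde r^0 \in \mathbb{R}^n$, and place (the lifts of) the two cops at $\tilde r^0 \pm v$ for a suitable vector $v$; but the cops do not choose the initial positions — the robber does. So instead the cops must actively achieve collinearity. Here I would have cop $C_1$ move toward the robber and cop $C_2$ move so as to get onto the ray from the robber through $C_1$, extended past the robber. A convenient device: pick any line $\ell$ through $\tilde r^0$; have both cops move to $\ell$ (possible in finitely many steps since they are free to ignore the robber while doing so, and the robber's displacement is also finite per step — but the robber moves too, so $\ell$ must be tracked). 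Because $\mathbb{R}^n$ is flat, one can reduce to the following: let the cops guard, respectively, the two closed rays of a line through the current robber position. In fact the simplest rigorous route is to note that after the robber's first move to $r^1$, the cops reposition along the line through $r^1$ parallel to a fixed direction; using the same $\tfrac{\tau(n)}{2}$-splitting idea as in Lemma~\ref{lem:Rn2cops}, one shows they can \emph{reach} that line (reducing the transverse distance to $0$, or to $<\eps$) while the robber is free, and thereafter maintain collinearity-up-to-$\eps$, at which point Lemma~\ref{lem:Rn2cops} and its Remark apply. I expect the bookkeeping of "reach the line while the robber moves" to be the main obstacle — it is essentially a $1$-dimensional pursuit in the transverse coordinates, won because $\sum_n \tau(n) = \infty$ forces the robber to spend only finitely much agility in those coordinates or else let the cops catch up along the line.

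In summary, the proof structure is: (i) $c(T^n) \geq 2$ via the $S^1$ retract and Lemma~\ref{lem:subset}; (ii) pass to the cover $\mathbb{R}^n$ via Lemma~\ref{lem:covering}; (iii) on $\mathbb{R}^n$, two cops first force the robber to (essentially) lie on the segment between them, spending only finitely much of the agility on this phase or else winning outright; (iv) apply Lemma~\ref{lem:Rn2cops} and the Remark to conclude the cops approach the robber arbitrarily closely. Combining (i) and (ii)--(iv) yields $c(T^n) = 2$.
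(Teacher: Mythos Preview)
Your scaffolding matches the paper's: lower bound via the $S^1$ retract and Lemma~\ref{lem:subset}, upper bound via the cover $\mathbb{R}^n$ together with Lemma~\ref{lem:Rn2cops}. The gap is exactly where you put it, and it comes from a misconception you state explicitly. You consider placing the cops' lifts at $\tilde r^0 \pm v$ and reject this because ``the cops do not choose the initial positions --- the robber does.'' That conflates positions in $T^n$ with lifts in $\mathbb{R}^n$. The robber fixes the cops' positions in $T^n$; but when the cops run the imagined game on the cover they are free to choose \emph{which} preimage of each cop's position they track. Indeed, Lemma~\ref{lem:covering} as stated does not literally apply here (\,$\mathbb{R}^n$ is not compact, and two cops certainly do not win on $\mathbb{R}^n$ from arbitrary initial positions --- the robber just runs to infinity\,); what one really uses is the proof technique of that lemma, and there the choice of lift is the cops' to make.

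The paper exploits this with a short arithmetic trick. After a preliminary phase and a translation of coordinates one may assume both cops sit at $0\in T^n$, so every point of $\mathbb{Z}^n$ is a legitimate lift of either cop. After the robber's first move to $\tilde r^{1}$ in the cover, pick a rational point $p=(p_1/q_1,\dots,p_n/q_n)$ with $\lVert \tilde r^{1}-p\rVert\le\tau(1)$, set $Q=q_1\cdots q_n$, and take the two lifts to be $0$ and $Qp\in\mathbb{Z}^n$. Both cops then step by the vector $\tilde r^{1}-p$; now $\tilde r^{1}$ lies on the segment $[\tilde r^{1}-p,\ \tilde r^{1}+(Q-1)p]$ between them, and Lemma~\ref{lem:Rn2cops} applies immediately. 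By contrast, your plan of \emph{actively} forcing collinearity in $\mathbb{R}^n$ from generic lifts is not just bookkeeping. Your heuristic (``the robber spends only finitely much agility in the transverse coordinates or else lets the cops catch up along the line'') fails: a robber who moves at full speed perpendicular to the current line $c_1c_2$ forces the cops to spend their entire step translating with him, so neither the transverse distance nor the longitudinal gap shrinks; and if both cops are lifted to the same point, the robber simply runs away and the distance never decreases at all. The idea you are missing is not analytic but arithmetic: use the lattice $\mathbb{Z}^n$ of lifts over a single point of $T^n$ to obtain collinearity for free.
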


\begin{proof}
As $S^1 \subseteq T^n$ and the projection $T^n \to S^1$ is a $1$-Lipschitz mapping, Lemma~\ref{lem:subset} yields $c(T^n) \geq 2$. 

The universal covering space of $T^n$ is the Euclidean space $\mathbb{R}^n$. We will prove that with good initial positions two cops can win the game on $\mathbb{R}^n$, thus by Lemma~\ref{lem:covering}, $c(T^n) \leq 2$ as well. Without loss of generality we may assume that $c_1^0 = c_2^0 = (0,\ldots,0) \in T^n$. The robber moves from $r^0$ to $r^1$ in his first move. There exists a point with rational coordinates that is at most $\tau_0$ away from $r^1$, say $(\frac{p_1}{q_1}, \ldots, \frac{p_n}{q_n})$, where $p_i, q_i \in \mathbb{N}_0$ for $i \in [n]$. Let $Q = q_1  q_2\cdots q_n$. Imagine the cops' initial positions in the Euclidean space are $c_1^0 = (0,\ldots,0) \in \mathbb{R}^n$ and $c_2^0 = (Q \cdot \frac{p_1}{q_1}, \ldots, Q \cdot \frac{p_n}{q_n})$. They first make the step $r^1-(\frac{p_1}{q_1}, \ldots, \frac{p_n}{q_n})$. As a result, $r^1$ now lies on the line between $c_1^1$ and $c_2^1$. Using Lemma~\ref{lem:Rn2cops}, these two cops can now  come $\eps$-close to the robber in the game on $\mathbb{R}^n$. Thus the two cops win the $\eps$-approaching game on $T^n$. This implies that $c(T^n) \leq 2$.
\end{proof}

On the other hand, $n+1$ cops are needed to catch the robber on $T^n$.

\begin{theorem}
\label{thm:T2flat-catch}
If $n\geq 1$, then $c_0(T^n) = n+1$.
\end{theorem}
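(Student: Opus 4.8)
The statement to prove is $c_0(T^n)=n+1$, so the plan is to establish the two bounds $c_0(T^n)\ge n+1$ and $c_0(T^n)\le n+1$ separately, the first by an escape strategy for the robber, the second via the covering space $\mathbb{R}^n$.

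\textbf{Lower bound.} I would have the robber play in the universal cover $\mathbb{R}^n$. He starts with all $n$ cops at distance less than $\tfrac12$ from him and declares a constant agility $\tau(m)\equiv\tau_0$ with $0<\tau_0<\tfrac16$ (so $\sum_m\tau(m)=\infty$). Fix a lift $\tilde r^k$ of the robber and, for each cop $C_i$, the lift $\tilde c_i^k$ nearest to $\tilde r^k$. If the robber has not been caught then $v_i:=\tilde r^k-\tilde c_i^k\ne 0$ for every $i$. The key point is that $n$ vectors never positively span $\mathbb{R}^n$ (the cone they generate is a proper subcone), so the dual cone $\{w:\langle w,v_i\rangle\ge 0\text{ for all }i\}$ contains a unit vector $w$; the robber moves distance $\tau_0$ along $w$. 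Then $|v_i+\tau_0 w|\ge\sqrt{|v_i|^2+\tau_0^2}>\tau_0$, so $C_i$ cannot reach $\tilde r^{k+1}$ using the lift $\tilde c_i^k$; and since $d(r^k,c_i^k)<\tfrac12$, every other lift of $C_i$ is at distance $>\tfrac12>2\tau_0$ from $\tilde r^k$, hence $>\tau_0$ from $\tilde r^{k+1}$, so $C_i$ cannot reach the robber through another lift either. A cop that temporarily drifts past distance $\tfrac12-\tau_0$ is simply too far to capture next step and regains a unique nearest lift once it returns, so it does not interfere. Hence the robber is never caught and $c_0(T^n)\ge n+1$.

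\textbf{Upper bound.} Here I would use the covering $p\colon\mathbb{R}^n\to T^n$ as in the proof of Theorem~\ref{thm:Tnflat}, noting that Lemma~\ref{lem:covering} holds verbatim with ``win'' replaced by ``catch''. After translating we may assume all $n+1$ cops start at the origin of $T^n$, whose lifts form the lattice $\mathbb{Z}^n$. The robber makes his first move to $r^1$; the cops then fix lifts $\ell_0,\dots,\ell_n\in\mathbb{Z}^n$ of the origin so that a chosen lift $\tilde r^1$ lies in the interior of the simplex $\operatorname{conv}(\ell_0,\dots,\ell_n)$ — possible by taking, say, $\ell_j=Me_j$ for $1\le j\le n$ and $\ell_0=-M(e_1+\dots+e_n)$ with $M\in\mathbb{N}$ large, since this simplex is centered at the origin and can be made arbitrarily large. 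From this configuration the $n+1$ cops carry out, in $\mathbb{R}^n$, the pursuit in which they keep the robber's lift inside their convex hull while shrinking the hull toward him; because $\sum_m\tau(m)=\infty$ the hull collapses to a point after finitely many steps and the robber's lift is caught. Projecting by $p$ and applying Lemma~\ref{lem:covering} for catching, the cops catch the robber on $T^n$, so $c_0(T^n)\le n+1$.

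\textbf{Where the difficulty lies.} The lower bound needs only careful bookkeeping of lifts, handled by keeping all relevant cop–robber distances below $\tfrac12$. The crux is the $\mathbb{R}^n$-pursuit used in the upper bound — the $(n+1)$-cop analogue of Lemma~\ref{lem:Rn2cops} — namely the claim that $n+1$ equal-speed cops who have the robber strictly inside their convex hull have a strategy to keep him there and force capture in finitely many moves (together with checking that the simplex of lifts above contains $\tilde r^1$ with enough room). A tempting alternative is to imitate Theorem~\ref{thm:ball-catch} via $T^n\cong T^{n-1}\times S^1$, with base case $c_0(S^1)=2$ (the two-cop arc-squeeze, in which the trapping arc loses $2\tau(m)$ per step), letting $n$ cops handle the $T^{n-1}$-factor and two cops the $S^1$-factor with one cop shared, for a total of $n+1$; but this route is genuinely harder than the ball case, because the $S^1$-factor satisfies $c_0(S^1)=2$ rather than the $c_0([-1,1])=1$ of the interval factor there, so no single cop can pin down the robber's last coordinate, and arranging this coordination is the obstacle one must overcome.
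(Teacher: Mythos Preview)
Your outline matches the paper's proof: both bounds are argued the way you propose. For the lower bound your dual-cone argument in the cover is correct; the paper states the same geometric fact directly on $T^n$ --- the $\tau$-ball around the robber cannot be contained in the union of the $n$ cops' $\tau$-balls unless some cop already occupies the robber's position --- which avoids all the lift bookkeeping but is the same idea.

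For the upper bound you have correctly isolated the crux and left it open; the paper supplies exactly the mechanism you are missing. Each cop $c_i$ maintains the perpendicular bisector $\ell_i^\perp$ of the segment from itself to the robber: if the robber's step carries him away from $\ell_i^\perp$ the cop translates in parallel (distance unchanged, bisector direction preserved); if toward $\ell_i^\perp$, the cop jumps to the reflection of the robber's new position across $\ell_i^\perp$ (a legal move, bisector unchanged, distance strictly smaller). For $n=2$ the lattice lifts are taken concretely as $(0,-100)$, $(200,100)$, $(-200,100)$, forcing the pairwise angles among the rays $\ell_i=r c_i$ into $[\tfrac{10\pi}{18},\tfrac{14\pi}{18}]$; one then checks that on every step of length $\tau_m$ the robber approaches at least one bisector at angle at most $\tfrac{7\pi}{18}$ to the corresponding ray, so $D_m=\sum_i d(r^m,c_i^m)$ drops by at least $2\tau_m\cos(\tfrac{7\pi}{18})$, and $\sum_m\tau_m=\infty$ drives $D_m$ below zero in finitely many steps. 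For general $n$ the paper only sketches the analogue for the regular simplex sitting in the hyperplane $\sum x_j=1$ of $\mathbb{R}^{n+1}$: a robber step of length $\tau$ must change some barycentric coordinate by at least $\tau/\sqrt{n+1}$, and from this one computes that the projection of the step onto the corresponding cop--robber direction has norm at least $\tau/(n\sqrt{n(n+1)})$. This per-step decrement proportional to $\tau_m$ is precisely what your clause ``because $\sum_m\tau(m)=\infty$ the hull collapses'' is assuming without proof; producing it is the substance of the upper bound.
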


\begin{proof}
On $T^n$, $n$ cops cannot catch a robber. Let $t$ be the length of the current robber's step. Since the $t$-ball around $r^k$ can be entirely contained in the union of $t$-balls around $c_1^k, \ldots, c_n^k$ if and only if $r^k = c_i^k$ for some $i \in [k]$, the robber has a direction in which he can move to escape being captured in this move.

Next, we prove in full detail that three cops suffice to catch the robber on $T^2$.
We imagine the game is played on the universal cover space of $T^2$, so on $\mathbb{R}^2$. Without loss of generality, $c_1^0 = c_2^0 = c_3^0 = (0,0)$. Let the robber's initial position be $r^0 \in [0,1]^2$ in the covering space $\mathbb{R}^2$. We imagine the cops' initial positions are $c_1^0 = (0, -100)$, $c_2^0 = (200, 100)$ and $c_3^0 = (-200, 100)$. For $i \in [3]$, let $\ell_i$ be the ray starting at $r_0$ that goes through $c_i^0$, and let $\ell_i^\bot$ be the bisector of the line segment between $c_i^0$ and $r^0$. The selected initial conditions in the covering space ensure that the angle $\alpha_1$ of the slope of $l_1$ is in $[\arctan(100),\pi/2]$, the angle $\alpha_2$ of the slope of $l_2$ is in $[\arctan(\frac{99}{201}),\arctan(\frac{1}{2})]$ and the angle $\alpha_3$ of the slope of $l_3$ is in $[\pi-\arctan(\frac{1}{2}) ,\pi-\arctan(\frac{99}{201})]$. The angles of the lines $l_i^T$
are at $\alpha_i+\pi/2$ or $\alpha_i-\pi/2$.

A short calculation shows that the angles between the bisectors $\ell_i^\bot$ ($i \in [3]$) are between $\frac{\pi}{3} - \frac{\pi}{18} = \frac{5 \pi}{18}$ and $\frac{\pi}{3} + \frac{\pi}{18} = \frac{7 \pi}{18}$ and the angles between the rays $\ell_i$ ($i \in [3]$) are between $\frac{10\pi}{18}$ and $\frac{14 \pi}{18}$.

The cops' strategy is to move in the same direction as the robber if the robber is moving away from the bisector between them, or to move to the reflection along the bisector of the robber's new position. This clearly maintains angles between lines $\ell_i^\bot$, $i \in [3]$, and angles between lines $\ell_i$, $i \in [3]$. Let $D_n = \sum_{i=1}^3 d(r^n, c_i^n)$. The above strategy ensures that $D_n$ is decreasing. Indeed, when the robber moves from $r^n$ to $r^{n+1}$, $d(r^n, r^{n+1}) = \tau_n$, he moves closer to either one or two bisectors between him and a cop. If he moves closer to only one of the bisectors (say closer to the cop $c_j$), then the angle between the move $r^n \to r^{n+1}$ and $\ell_j$ is at most $\frac{7 \pi}{18}$, thus $D_n - D_{n+1} \geq 2 \tau_n \cos(\frac{7 \pi}{18})$.
If the robber moves closer to two of the bisectors (say closer to the cops $c_j$ and $c_k$), then at least one of the angles between the move $r^n \to r^{n+1}$ and $\ell_j$ or $\ell_k$ is smaller than one half of the angle between $\ell_j$ and $\ell_k$, which is smaller than $\frac{7 \pi}{18}$. Thus again, $D_n - D_{n+1} \geq 2 \tau_n \cos(\frac{7 \pi}{18})$.

Since $\sum_n \tau_n = \infty$, there exists $N_0 \in \mathbb{N}$ such that $\sum_{n=1}^{N_0} \tau_n \geq \frac{D_0+1}{2 \cos(\frac{7 \pi}{18})}$. Thus $D_{N_0} = \sum_{i=1}^3 d(r^{N_0}, c_i^{N_0}) \leq D_0 - \sum_{n=1}^{N_0} 2 \tau_n \cos(\frac{7 \pi}{18})  \leq D_0 - 2 \cos(\frac{7 \pi}{18}) \frac{D_0+1}{2 \cos(\frac{7 \pi}{18})} = -1$, which is a contradiction. Hence the cops are able to catch the robber. This concludes the proof for $n=2$.

In higher dimension ($n \geq 3$) the strategy is to choose the initial positions of the cops (in the covering space) such that the robber is caught in a simplex. Then, as in the previous argument, with every step of the robber, there is a cop which gets at least a  positive fraction of the step closer to the robber. We skip the details, but show that there exists such a fixed positive fraction if the robber was caught in a regular $n$-simplex in $\mathbb{R}^{n+1}$.

Let $n+1$ points $S = \{ e_1 = (1,0,\ldots,0), e_2 = (0,1,0,\ldots,0), \ldots, e_{n+1}=(0,\ldots, 0,1) \}$ span the regular simplex $\Sigma$ in $\mathbb{R}^{n+1}$. A point $x = (x_1, \ldots, x_{n+1})$ lies in $\Sigma$ if and only if $x_1 + \cdots + x_{n+1} = 1$ and $x_i \geq 0$ for $i \in [n+1]$. Let the robber's position after step $r$ be $x = (a_1, \ldots, a_{n+1})$ in the interior of $\Sigma$, let $c_i$ be the reflection of $x$ over the hyperplane $\Pi_i$ spanned by $S \setminus \{e_i\}$. The normal of $\Pi_i$ is $n_i = (1,\ldots,1,0,1,\ldots,1)$, where $0$ is on the $i$th coordinate, and $e_{i-1}$ (index modulo $n+1$) lies on $\Pi_i$. Then $c_i = x + 2 \frac{(e_{i-1}-x) \cdot n_i}{n} n_i$. Recall that projection of $a$ onto $b$ is $\proj_b a = \frac{a \cdot b}{b \cdot b} b$. Thus $c_i = x + \frac{2 a_i}{n} n_i$ and $c_i - x = \frac{2 a_i}{n} n_i$.  Notice that  $\Vert c_i - x \Vert^2 = \frac{4 a_i^2}{n}$.

Let the robber's position after step $r+1$ be $x' = (b_1, \ldots, b_{n+1}) \in \Sigma$ and let $\Vert x'-x \Vert = \tau$. Since $\Vert x'-x \Vert^2 = \sum_{j=1}^{n=1} (b_j - a_j)^2 = \tau^2$, at least one of the terms is greater than $\frac{\tau^2}{n+1}$. Without loss of generality, $(b_1 - a_1)^2 \geq \frac{\tau^2}{n+1}$. 

If $b_1 - a_1 < 0$, then $a_1 - b_1 \geq \frac{\tau}{\sqrt{n+1}}$. Thus $(x'-x) \cdot (c_1-x) = \frac{2 a_1}{n} (a_1 - b_1) \geq \frac{2 a_1}{n} \frac{\tau}{\sqrt{n+1}}$. Therefore $\proj_{c_1 - x} (x'-x) \geq \frac{\tau}{\sqrt{n (n+1)}} \frac{c_1-x}{\Vert c_1-x \Vert}$.

If $b_1 - a_1 \geq 0$, then $b_1 - a_1 \geq \frac{\tau}{\sqrt{n+1}}$. Since $0 = \sum_{j=1}^{n=1} (b_j - a_j) \geq \frac{\tau}{\sqrt{n+1}} + \sum_{j=2}^{n=1} (b_j - a_j)$, at least one of the terms is smaller than $\frac{- \tau}{n \sqrt{n+1}}$. Without loss of generality, $b_2 - a_2 \leq \frac{- \tau}{n \sqrt{n+1}}$, i.e., $a_2 - b_2 \geq \frac{\tau}{n \sqrt{n+1}}$. Thus $(x'-x) \cdot (c_2-x) \geq \frac{2 a_2}{n} \frac{\tau}{n \sqrt{n+1}}$ and $\proj_{c_2-x} (x'-x) \geq \frac{\tau}{n \sqrt{n(n+1)}} \frac{c_2-x}{\Vert c_2-x \Vert}$.

Thus it is always true that at least one of $\proj_{c_i-x} (x'-x)$ is directed towards $c_i$ and has norm greater or equal $\frac{\tau}{n \sqrt{n(n+1)}}$.

To prove that $n+1$ cops suffice to catch the robber on $T^n$, we use change of coordinates and the argument explained above.
\end{proof}

\section{A space on which no finite number of cops can catch the robber}
\label{sec:unboundedc0}

The sequence space $\ell^2$ is the space of  square-summable sequences, $\ell^2 = \{ x = (x_1, x_2, \ldots)\, | \;\allowbreak \sum_{i=1}^\infty x_i^2 < \infty \}$. The distance between $x, y \in \ell^2$ is $d(x,y) = \sqrt{\sum_{i=1}^\infty |x_i - y_i|^2}$. 

Let $D \subset \ell^2$ be the subspace $\{ x \in \ell^2 \, | \;  \Vert x \Vert  \leq 1, \forall i \geq 1\colon |x_i| \leq \frac{1}{i} \}$. Since $D$ is a metric space, compactness is equivalent to sequential compactness. Thus it suffices to prove that every sequence in $D$ has a convergent subsequence whose limit belongs to $D$. Let $A = \{a^{(1)}, a^{(2)}, \ldots\}$ be a sequence in $D$. Let us consider the first term of every element in the sequence: $a_1^{(1)}, a_1^{(2)}, \ldots$. For every $j \geq 1$, $a_1^{(j)} \in [-1, 1]$, so there exists a subsequence $a^{(1')}, a^{(2')}, \ldots$ such that the first terms converge to a number in $[-1, 1]$. Within the sequence $a^{(1')}, a^{(2')}, \ldots$ we can find a subsequence such that the second terms converge to a number in $[-\frac12, \frac12]$. Repeating this for every $i \geq 1$ we obtain a subsequence $B = \{ b^{(1)}, b^{(2)}, \ldots \}$ of $A$ with the property that for every $i \geq 1$ the $i$th terms converge to a number in $[-\frac{1}{i}, \frac{1}{i}]$. Thus $A$ has a point-wise convergent subsequence $B$, let $c \in \ell^2$ be its limit. Since for every $j \geq 1$, $ \Vert a^{(j)} \Vert  \leq 1$, we have $ \Vert c \Vert  \leq 1$. And since for every $i \geq 1$, $c_i \in [-\frac{1}{i}, \frac{1}{i}]$, the limiting sequence $c$ is in $D$.

\begin{proposition}
\label{prop:c0-unbounded}
If $D  = \{ x \in \ell^2\, | \; \Vert x \Vert  \leq 1, \forall i \geq 1\colon |x_i| \leq \frac{1}{i} \}$, then $c_0(D) = \infty$.
\end{proposition}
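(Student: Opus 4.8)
The plan is to show that for any fixed number $k$ of cops, the robber can escape capture in $D$ by exploiting the "infinitely many nearly-independent directions" that $D$ provides. The heart of the matter is a dimension-type argument: $k$ cops at positions $c_1,\dots,c_k$ occupy a finite-dimensional affine subspace, and the robber, sitting at a point of $D$, can always find a coordinate direction in which moving keeps him inside $D$ yet instantaneously increases his distance from every cop. This mirrors the escape arguments in Theorem~\ref{thm:ball-catch} and Theorem~\ref{thm:T2flat-catch}, except that here no finite $k$ works because $D$ is genuinely infinite-dimensional while staying compact.

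First I would fix $k$ and have the robber choose an agility $\tau$ that is summable in squares, e.g.\ $\tau(n)=\frac{c}{n+2}$ with $c$ small enough that $\sum_n \tau(n)^2$ is much less than the "room" available in the tail coordinates; the key quantitative fact is $\sum_{i=N}^\infty \frac{1}{i^2}\to 0$, so for large $N$ the coordinates $x_N,x_{N+1},\dots$ have collectively plenty of unused slack. The robber starts at, say, the origin (or any point with all tail coordinates zero). Inductively, suppose after step $m$ the robber is at $r^m\in D$ with all coordinates beyond some index $N_m$ equal to $0$, and is uncaught. The cops' positions $c_1^m,\dots,c_k^m$ together with $r^m$ span an affine subspace of dimension at most $k$. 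Among the "free" coordinate directions $e_j$ with $j > N_m$ — of which there are infinitely many — at least one, say $e_{j_0}$, is such that the hyperplane through $r^m$ orthogonal to... more precisely: the robber wants a unit direction $u$, supported on free coordinates, with $u\cdot(c_i^m - r^m)\le 0$ for all $i\in[k]$. Since $\{c_i^m - r^m\}$ spans a space of dimension $\le k$, its orthogonal complement within the (infinite-dimensional) span of free coordinates is nonempty; pick any unit $u$ there, so in fact $u\cdot(c_i^m-r^m)=0$ for all $i$. The robber moves $r^{m+1}=r^m+\tau(m+1)\,u$. Then $d(r^{m+1},c_i^m)^2 = d(r^m,c_i^m)^2 + \tau(m+1)^2 > \tau(m+1)^2 \ge d(c_i^m,c_i^{m+1})^2$ is the wrong comparison — instead one argues directly: no cop can reach $r^{m+1}$ because $d(r^m,c_i^m)>0$ (uncaught) forces $d(r^{m+1},c_i^m) = \sqrt{d(r^m,c_i^m)^2+\tau(m+1)^2} > \tau(m+1)$, so $c_i^{m+1}\ne r^{m+1}$.

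Next I would check the robber never leaves $D$. This requires two things: $\|r^{m}\|\le 1$, and $|r^m_i|\le \frac1i$ for all $i$. For the norm, $\|r^m\|^2 \le \sum_{j=1}^m \tau(j)^2 + \|r^0\|^2$ which I arrange to be $<1$ by the choice of $c$ (exactly as in Theorem~\ref{thm:ball-catch}). For the coordinate box constraint, the delicate point is that the direction $u$ may spread mass over many free coordinates, and over infinitely many steps a single coordinate $x_i$ could accumulate. The clean fix is to restrict the robber at step $m+1$ to move along a \emph{single} free coordinate axis $e_{j_0}$ with $j_0$ chosen larger than any index used so far; this is possible because we only need $u$ to lie in a $>k$-dimensional space of free coordinates, and "move one new coordinate at a time" is the simplest such choice. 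Then each coordinate $x_{j_0}$ is touched in exactly one step, so $|r^m_{j_0}| \le \tau(m+1) \le \tau(1) = \frac{c}{3}$, which is $\le \frac{1}{j_0}$ provided $j_0 \ge 3/c$ — and since we always pick $j_0$ as large as we like, this is automatic. This also keeps the "all coordinates beyond $N_m$ vanish" invariant with $N_{m+1}=j_0$. Concatenating, $v = \inf_{n}\min_i d(r^n,c_i^n) > 0$ would need a lower bound; in fact we only need $v \ne 0$ for $c_0$, i.e.\ the robber is never caught, which the strict inequality above gives at every step. Hence $k$ cops cannot catch the robber, and since $k$ was arbitrary, $c_0(D)=\infty$.

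The main obstacle is organizing the infinite-dimensional bookkeeping so that the box constraints $|x_i|\le\frac1i$ are respected forever while still guaranteeing an escape direction at every step; the "use a fresh coordinate each move" trick resolves it, but one must verify that there is always a fresh free coordinate whose axis is orthogonal to all $k$ vectors $c_i^m - r^m$ — which is immediate since a single coordinate axis $e_{j_0}$ with $j_0$ huge will be (approximately, then exactly after a limiting/genericity choice) orthogonal to the finite list, or more robustly: among any $k+1$ distinct fresh coordinate axes, their span has dimension $k+1$ and hence meets the orthogonal complement of $\mathrm{span}\{c_i^m-r^m\}$ nontrivially, yielding a unit escape vector supported on at most $k+1$ fresh coordinates. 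One then bounds the accumulation in each of those finitely many coordinates over the one step they are used, exactly as before. A secondary point worth a sentence is that $D$ is a legitimate game space: compactness was verified in the text preceding the proposition, and $D$ is geodesic because it is a closed convex subset of $\ell^2$, so straight-line segments realize the distance.
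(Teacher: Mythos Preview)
Your approach is genuinely different from the paper's. The paper never builds an escape strategy inside $D$; instead it applies Lemma~\ref{lem:subset} twice. The coordinate truncation $\alpha\colon D\to D^k$, $\alpha(x)=(x_1,\dots,x_k,0,0,\dots)$, is a $1$-Lipschitz retraction onto the finite-dimensional slice $D^k=\{x\in D: x_i=0\text{ for }i>k\}$, so $c_0(D)\ge c_0(D^k)$. Then the nearest-point (equivalently radial) projection $\beta\colon D^k\to M^k$ onto the ball $M^k=\{x:\Vert x\Vert\le\tfrac1k\}\subset D^k$ is again a $1$-Lipschitz retraction, so $c_0(D^k)\ge c_0(M^k)=k$ by the argument of Theorem~\ref{thm:ball-catch}. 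Hence $c_0(D)\ge k$ for every $k$, with no bookkeeping in $\ell^2$ at all. Your direct construction is more self-contained and produces an explicit strategy, at the cost of the coordinate accounting you describe.

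That accounting, however, contains a real error. You claim $|r^m_{j_0}|\le\tau(1)=\tfrac{c}{3}\le \tfrac1{j_0}$ ``provided $j_0\ge 3/c$,'' and that choosing $j_0$ large makes this automatic. But $\tfrac{c}{3}\le \tfrac1{j_0}$ is equivalent to $j_0\le \tfrac{3}{c}$, not $j_0\ge\tfrac3c$: the larger the index, the \emph{tighter} the box constraint $|x_{j_0}|\le\tfrac1{j_0}$, so pushing mass into high coordinates is harder, not easier. The repair is to tie the indices to the step number. At step $m$ you have $\tau(m)=\tfrac{c}{m+2}$, so the $k{+}1$ fresh indices you use must satisfy $j\le\tfrac{m+2}{c}$. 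Taking $c\le\tfrac1{k+1}$ gives $\tfrac{m+2}{c}\ge (k{+}1)(m{+}2)$, which exceeds the at most $(k{+}1)(m{-}1)$ indices consumed in earlier steps by at least $3(k{+}1)>k{+}1$; hence enough fresh indices in the allowed range always exist. With this correction your orthogonality and Pythagoras argument goes through and the robber never leaves $D$.
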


\begin{proof}
For $k \geq 1$ let $D^k = \{ x = (x_1, \ldots, x_k, 0, 0, \ldots) \in \ell^2\, | \;  \Vert x \Vert  \leq 1, \forall i \in [k] \colon |x_i| \leq \frac{1}{i}\}$. Clearly, $D^k \subseteq D$. $D^k$ is compact by the same argument we used to show that $D$ is compact.

Let $\alpha \colon D \to D^k$, $\alpha((x_1, x_2, \ldots)) = (x_1, \ldots, x_k, 0, 0, \ldots)$. Notice that $\alpha|_{D^k} = id|_{D^k}$. For $x, y \in D$ we have $$d(\alpha(x), \alpha(y)) = \sqrt{\sum_{i=1}^k |x_i - y_i|^2} \leq \sqrt{\sum_{i=1}^\infty |x_i - y_i|^2} = d(x,y),$$ thus $\alpha$ is a 1-Lipschitz mapping. So Lemma~\ref{lem:subset} gives $c_0(D) \geq c_0(D^k)$.

Let $M^k$ denote a $k$-dimensional ball of radius $\frac{1}{k}$ (i.e.\ $M^k = \{ x \in \mathbb{R}^k\, | \; \Vert x \Vert  \leq \frac{1}{k}\}$, so in particular $|x_i| \leq \frac{1}{k}$). The same argument as in the proof of Theorem~\ref{thm:ball-catch} gives that $c_0(M^k) = k$. 

We can identify $M^k$ with the ball of radius $\frac{1}{k}$ in $D^k$. Observe that the radial projection $\beta \colon D^k \to M^k$ is $1$-Lipschitz, thus $c_0(D) \geq c_0(D^k) \geq c_0(M^k) = k$ (by Lemma~\ref{lem:subset}). It follows that $c_0(D) = \infty$.
\end{proof}

Note that the game space $D$ from Proposition~\ref{prop:c0-unbounded} is starshaped, thus $c(D) = 1$ by Proposition~\ref{prop:radial1}.

\end{document}